\theoremstyle{plain}
\newtheorem{theorem}{Theorem}
\newtheorem{lemma}[theorem]{Lemma}
\newtheorem{corollary}[theorem]{Corollary}
\renewcommand{\geq}{\geqslant}
\renewcommand{\leq}{\leqslant}
\def\dfrac#1#2{\lower0.15ex\hbox{\large$\frac{#1}{#2}$}}
\title{Estimates of the coverage of parameter space by Latin Hypercube and Orthogonal sampling: connections between Populations of Models and Experimental Designs}
\author{
Diane Donovan\thanks{School of Mathematics and Physics, The University of Queensland,
 Queensland 4072,
Australia. \texttt{dmd@maths.uq.edu.au}}\\ \and
Kevin Burrage,\thanks{Department of Computer Science, University of Oxford, UK; ARC Centre of Excellence for Mathematical and Statistical Frontiers, Queensland University of Technology (QUT), Australia \texttt{kevin.burrage@qut.edu.au}}\\ \and
Pamela  Burrage,\thanks{ARC Centre of Excellence for Mathematical and Statistical Frontiers, Queensland University of Technology (QUT), Australia \texttt{pamela.burrage@qut.edu.au}}\\ \and
Thomas A McCourt,\thanks{Department of Mathematics and Statistics, Plymouth University, Plymouth, UK. \texttt{ thomas.mccourt@plymouth.ac.uk}  }
\and
Harold Bevan Thompson\thanks{School of Mathematics and Physics, The University of Queensland,
 Queensland 4072,
Australia. \texttt{hbt@maths.uq.edu.au}}
\and
Emine \c{S}ule Yaz{\i}c{\i}
\thanks{
Department of Mathematics, Ko\c{c} University, Sar{\i}yer,
34450, \.{I}stanbul, Turkey
\texttt{eyazici@ku.edu.tr}}}
\begin{document}

\maketitle

\begin{abstract}

In this paper we use counting arguments to prove that the expected percentage coverage of a $d$ dimensional parameter space of size $n$ when performing $k$ trials with either  Latin Hypercube sampling or Orthogonal sampling (when $n=p^d$) is the same.  We then extend these results to an experimental design setting by projecting onto a 2 dimensional subspace.  In this case the coverage  is equivalent to the Orthogonal sampling setting when the dimension of the parameter space is two.  These results are confirmed by simulations.  The ideas presented here have particular relevance when attempting to perform uncertainty quantification or when building populations of models.

\end{abstract}

\section{Introduction}
\label{sect:introduction} 

Latin Hypercube sampling (LHS) and its variants such as Orthogonal sampling are becoming key tools in many areas of mathematical modelling such as uncertainty quantification \cite{CG1} and in building populations of models \cite{A1}.   In both cases a key feature is the design of a parameter space sampling strategy, with the goal to achieve the maximum inference by varying multiple parameters at the same time.

 In the case of uncertainty quantification, a quantity, either a random variable or a random response, is often expressed in some basis expansion (Hermite polynomials, for example) and the coefficients can be estimated using some sampling technique.  Such an approach has been used, for instance, to forecast reservoir-performance in the petroleum industry   \cite{LSZ} and to conduct a buckling analysis of a joined-wing model \cite{CG1}. In such cases uncertainty can stem from deficiency of measured data, but also from physical properties such as the heterogeneity of geological formations or buckling response and aeroelastic complications under the effect of compressive loads.
  In these cases, LHS can offer a low cost and relatively uniform coverage  of the sampling space.

  In the setting of a population of models (POM) \cite{MT}  a mathematical model is calibrated by a set of points,  rather than a single point, in parameter space,  all of which are selected to fit sets of experimental/observational data.
The POM approach  was originally proposed for neuroscience modelling, but has been recently extended to cardiac electrophysiology.  Here we highlight some recent research in this context.   \cite{A1} presents a general approach to exploring in-subject variability in cardiac cells. \cite{B3} is also concerned with ionic mechanisms of variability in cardiac cells. \cite{B4} presents a population of  rabbit ventricular action potential models. \cite{B1} explores inter-subject variability in human atrial action potential models and  \cite{A2} presents a population study of mRNA expression levels in failing and non-failing human hearts.  In  these setting, biomarkers, such as Action Potential Duration and beat-to-beat variability, are extracted from time course profiles and then the models are calibrated against these biomarkers. Upper and lower values of each biomarker as observed in the experimental data are used to guarantee that estimates of variability are within biological ranges for any model to be included in the population. If the data cannot be characterised by a set of biomarkers then time course profiles can be used and a normalised root-mean-square (NRMS) comparison between the data values and the simulation values at a set of time points can be used to calibrate the population.

The POM approach leads to methodologies that are essentially probabilistic in nature and it gives greater weight to the experimental, modelling, simulation feedback paradigm \cite{A3}.
By implementing experiments based on a population of models, as distinct from experiments based on a single model, the  variability in the underlying structure can be captured by allowing changes in the parameter values. This avoids complications arising from  decisions on the use of ``best'' or ``mean'' data, and  the difficulties of identifying such data.

We note that POM have similarities with Approximate Bayesian Computation (ABC) \cite{A5}.  However, in ABC the sampling is usually performed adaptively so as to converge to subregions of parameter space where the calibrated models lie, as distinct from random sampling of the entire space.  In this context \cite{B2} have analysed the sensitivity of cardiac cell models using Gaussian emulators. There are many ways to sample the parameter space, depending on costing constraints and therefore limits of computation.  A parameter sweep will cover the whole parameter space at a certain discrete resolution, while random sampling, Latin Hypercube sampling (LHS) and Orthogonal sampling (OS) will give increasingly improved coverage of parameter space when the number of samples is fixed and independent of the dimension of the space.

LHS was first introduced by  McKay, Beckman and Conover  \cite{MBC}. Suppose that the $d$ dimensional parameter space is divided into $n$ equally sized subdivisions in each dimension then a Latin Hypercube (LH) trial is a set of $n$ random samples with one  from each  subdivision; that is, each sample is the only one in each axis-aligned hyperplane containing it. A collection of Latin Hypercube $d$-trials forms a Latin Hypercube sample (LHS).  A variant of LHS, known as Orthogonal sampling, adds the requirement that for each trial the entire sample space must be sampled evenly at some coarse resolution.

An advantage of LHS is that it stratifies each univariate margin simultaneously,  while Stein \cite{stein} showed that  with LHS there is a form of variance reduction compared with uniform random sampling. Tang \cite{A7} suggested that it may also be important to stratify the bivariate margins.  For instance, an Experimental Design  may involve a large number of variables, but in reality only a relatively small number of these variables are  effective. One way of dealing with this problem has been to project the factors onto a subspace spanned by the effective variables. However this can result in a replication of sample points within the effective subspace. Welch et al. \cite{A8}  suggested   LHS as a method for screening for effective factors, but there is still no guarantee, even in the  case of bivariate margins, that this projection is uniformly distributed. Thus as an alternative, Tang \cite{A7} advocated Orthogonal sampling  and proposed a technique based on the existence of orthogonal arrays. Tang goes on to show that  Orthogonal sampling achieves uniformity on small dimensional margins. Tang's approach is to start with an orthogonal array, see \cite{CD}, and to replace its entries by random permutations to obtain an Orthogonal sample.

Given this background it is clearly important to be able to estimate the expected coverage of parameter space using sampling techniques such as LHS or OS.  Furthermore, it is also important to understand the relationship between Experimental Design and POM in this regard.  For example, it may be desirable to know if  a POM calibrates  for interactions of ``small strength''  by checking for all possible combinations of levels for, say, pairs or triples of variables. This would equate to investigating  the coverage of 2 and 3 dimensional subspaces.

In this setting   the authors \cite{A4} focused on  estimating the expected coverage of a 2 dimensional parameter space for a population of $k$ trials forming a LHS with each trial of size $n$. In particular, an incomplete counting argument was used to predict  the expected coverage  of points in the parameter space after $k$ trials of size $n$. These estimates were compared against numerical results based on a MATLAB implementation of  100 simulations. The results of the simulations led the authors to conjecture that the expected percentage coverage by $k$ trials of a 2 dimensional parameter space  tended to $1-e^{-k/n}.$

In a later paper \cite{BBDT} the authors extended this work and reported on the expected coverage of $d$ dimensional space based on MATLAB implementations of simulations of LHS and OS.
They also tested for uniform coverage of lower dimensional subspaces.  They conjectured that the coverage of  a $t$ dimensional subspace of a $d$ dimensional parameter space of size $n$ when performing $k$ trials of Latin Hypercube sampling takes the form
\begin{align}
P(k,n,d,t)=1-(1-1/n^{t-1})^k\mbox{ or }1-e^{-k/n^{t-1}}\label{conj1}
\end{align}
when $k$ is large,   suggesting that the coverage is independent of $d$. This work allowed the authors to make connections between building Populations of Models and Experimental Designs.  Further the results in \cite{BBDT} indicate that Orthogonal sampling is superior to Latin Hypercube sampling in terms of giving a more uniform coverage of the $t$ dimensional subspace at  the sub-block level when only attempting partial coverage of this subspace.

In this  paper we give formal counting arguments and prove that the conjectures given above are true.  We provide counting arguments and use combinatorial techniques to find the expected coverage of parameter space when taking the union of $k$ trials in the case of LHS and OS.  We extend these arguments in a natural manner to sub-block coverage when projecting onto a 2 dimensional subspace (Experimental Design).  We also give theoretical bounds on the percentage coverage of parameter space for both LHS and  OS (showing that they are equivalent with respect to  the coverage).  We then extend these estimates to the coverage when projecting down onto a 2 dimensional subspace.  Finally  we present some simulation results that support our theoretical results along with  concluding remarks.

\section{Methods}

We begin by reviewing  the well known methods used to generate  Latin Hypercube samples and formalise the definitions for Orthogonal samples.

 A Latin Hypercube trial on $d$ variables each taking $n$ values from the set $[n]=\{1,2,\dots,n\}$  may be  represented as an $n$ by $d$ matrix where each column is an arbitrary permutation of $[n]$, and with each row forming  a $d$-tuple of the Latin Hypercube trial.
 Thus  a {\em Latin Hypercube trial} (a LH $d$-trial)  is a randomly generated subset of $n$ points from a $d$ dimensional parameter space  satisfying the condition that the projections onto each of the 1 dimensional subspaces are permutations. A collection of $k$ Latin Hypercube $d$-trials forms a {\em Latin Hypercube sample} (LHS).

  By way of an example, LHT1 and LHT2 are two matrices which correspond to two Latin Hypercube $3$-trials on the set $\{1,2,\ldots,8\}$. Note that since a LH $d$-trial is a multiset it is invariant under any permutation of the rows.

 \begin{align*}
 \begin{array}{cccccccc}
 \mbox{ LHT}1&&\mbox{ LHT}2&&&\mbox{ LHT}3&& \mbox{ OT}4\\
\left[\begin{array}{ccc}
1&2&1\\
2&3&3\\
3&1&2\\
4&7&8\\
5&8&5\\
6&5&4\\
7&4&6\\
8&6&7\\
\end{array}\right]&&
\left[\begin{array}{ccc}
1&3&2\\
2&4&6\\
3&5&3\\
4&7&8\\
5&1&1\\
6&2&7\\
7&8&4\\
8&6&5\\
\end{array}\right]&&&
\left[\begin{array}{ccc}
(1,1)&(1,2)&(1,1)\\
(1,2)&(1,3)&(1,3)\\
(1,3)&(1,1)&(1,2)\\
(1,4)&(2,3)&(2,4)\\
(2,1)&(2,4)&(2,1)\\
(2,2)&(2,1)&(1,4)\\
(2,3)&(1,4)&(2,2)\\
(2,4)&(2,2)&(2,3)\\
\end{array}\right]&&
\left[\begin{array}{ccc}
(1,1)&(1,3)&(1,2)\\
(1,2)&(1,4)&(2,2)\\
(1,3)&(2,1)&(1,3)\\
(1,4)&(2,3)&(2,4)\\
(2,1)&(1,1)&(1,1)\\
(2,2)&(1,2)&(2,3)\\
(2,3)&(2,4)&(1,4)\\
(2,4)&(2,2)&(2,1)\\
\end{array}\right]
\end{array}
\end{align*}
 Latin Hypercube $3$-trials LHT1 and LHT2 may also be represented diagrammatically as illustrated in Figure \ref{LHS}. Here the value of the third variable is represented by the colour:

 \begin{center}
 \begin{tabular}{|lc|lc|lc|lc|}
\multicolumn{8}{c}{Third Coordinate}\\
\hline
Light Blue &1&
Light Pink &2&
Light Green &3&
Light Red &4\\
\hline
Dark Blue &5&
Dark Pink &6&
Dark Green &7&
Dark Red &8\\
\hline
\end{tabular}\end{center}

\begin{figure}
\caption{Projections onto the 2 dimensional subspaces for LHT1 and LHT2} \label{LHS}

\begin{center}
\begin{tabular}{ccc}

\scalebox{0.55}
{
\begin{tikzpicture}[fill=gray!50, scale=1,vertex/.style={circle,inner sep=1.5,color=blue,fill=blue,draw}]

\draw [yshift=6.5cm, color=gray!30] (2,1) -- (4,2) -- (4,4.5);
\draw [yshift=6.5cm, color=gray!30] (2,4.5) -- (2,2) -- (4,1);
\draw [yshift=6.5cm, color=gray!30] (3,2.5) -- (3,5);
\draw [yshift=6.5cm, color=gray!30] (1,1.5) -- (3,2.5) -- (5,1.5);
\draw [yshift=6.5cm, color=gray!30] (1,2.75) -- (3,3.75) -- (5,2.75);
\draw [yshift=6.5cm, color=gray!30] (1,4) -- (3,5) -- (5,4) -- (5,1.5);
\draw [yshift=6.5cm, ultra thick] (1,4) -- (1,1.5) -- (3,0.5) -- (5,1.5);

\node [yshift=6.5cm] at (3,5.5) {LHT1 8 sample points};

\node [yshift=6.5cm] at (3.25,0.25) {0};
\node [yshift=6.5cm] at (4,0.75) {4};
\node [yshift=6.5cm] at (5,1.25) {8};
\node [yshift=6.5cm] at (4.5,0.5) {$P_1$};

\node [yshift=6.5cm] at (2.75,0.25) {0};
\node [yshift=6.5cm] at (2,0.75) {4};
\node [yshift=6.5cm] at (1,1.25) {8};
\node [yshift=6.5cm] at (1.5,0.5) {$P_2$};

\node [yshift=6.5cm] at (0.75,1.5) {0};
\node [yshift=6.5cm] at (0.75,2.75) {4};
\node [yshift=6.5cm] at (0.75,4) {8};
\node [yshift=6.5cm] at (0.25,2.75) {$P_3$};

\node [yshift=6.5cm] at (2.75,1.1875) [vertex]{};
\node [yshift=6.5cm] at (2.75,2.0625) [vertex]{};
\node [yshift=6.5cm] at (3.5,1.625) [vertex]{};
\node [yshift=6.5cm] at (2.25,4.375) [vertex]{};
\node [yshift=6.5cm] at (2.25,3.6875) [vertex]{};
\node [yshift=6.5cm] at (3.25,3.125) [vertex]{};
\node [yshift=6.5cm] at (3.75,3.75) [vertex]{};
\node [yshift=6.5cm] at (3.5,4.4375) [vertex]{};


\xdefinecolor{darkgreen}{RGB}{143,188,143}
\xdefinecolor{darkred}{RGB}{139,0,0}

\node [xshift=6.5cm, yshift=6.5cm] at (3,5.5) {Projection onto {$P_1,P_2$} subspace};

\node [xshift=6.5cm, yshift=6.5cm] at (3,0.25) {$P_1$};
\node [xshift=6.5cm, yshift=6.5cm] at (0.25,3) {$P_2$};

\foreach \i in {1,...,8}
\node [xshift=6.5cm, yshift=6.5cm] at (0.75+0.5*\i,0.75) {\i};

\foreach \j in {1,...,8}
\node [xshift=6.5cm, yshift=6.5cm] at (0.75,0.75+0.5*\j) {\j};

\draw[xshift=6.5cm, yshift=6.5cm, fill=blue!30] (1,1.5) -- (1.5,1.5) -- (1.5,2) -- (1,2) -- cycle;
\node [xshift=6.5cm, yshift=6.5cm] at (1.25,1.75) {$1$};

\draw[xshift=6.5cm, yshift=6.5cm, fill=pink!50] (2,1) -- (2.5,1) -- (2.5,1.5) -- (2,1.5) -- cycle;
\node [xshift=6.5cm, yshift=6.5cm] at (2.25,1.25) {$2$};

\draw[xshift=6.5cm, yshift=6.5cm, fill=green!30] (1.5,2) -- (2,2) -- (2,2.5) -- (1.5,2.5) -- cycle;
\node [xshift=6.5cm, yshift=6.5cm] at (1.75,2.25) {$3$};

\draw[xshift=6.5cm, yshift=6.5cm, fill=red!100] (3.5,3) -- (4,3) -- (4,3.5) -- (3.5,3.5) -- cycle;
\node [xshift=6.5cm, yshift=6.5cm] at (3.75,3.25) {$4$};

\draw[xshift=6.5cm, yshift=6.5cm, fill=blue!80] (3,4.5) -- (3.5,4.5) -- (3.5,5) -- (3,5) -- cycle;
\node [xshift=6.5cm, yshift=6.5cm] at (3.25,4.75) {$5$};

\draw[xshift=6.5cm, yshift=6.5cm, fill=pink!150] (4,2.5) -- (4.5,2.5) -- (4.5,3) -- (4,3) -- cycle;
\node [xshift=6.5cm, yshift=6.5cm] at (4.25,2.75) {$6$};

\draw[xshift=6.5cm, yshift=6.5cm, fill=darkgreen!100] (4.5,3.5) -- (5,3.5) -- (5,4) -- (4.5,4) -- cycle;
\node [xshift=6.5cm, yshift=6.5cm] at (4.75,3.75) {$7$};

\draw[xshift=6.5cm, yshift=6.5cm, fill=darkred!100] (2.5,4) -- (3,4) -- (3,4.5) -- (2.5,4.5) -- cycle;
\node [xshift=6.5cm, yshift=6.5cm] at (2.75,4.25) {$8$};

\foreach \x in {0,...,6}
	\draw[xshift=6.5cm, yshift=6.5cm, color=gray!30] (0.5*\x+1.5,1)--(0.5*\x+1.5,5) ;
\foreach \x in {0,...,6}	
	\draw[xshift=6.5cm, yshift=6.5cm, color=gray!30] (1,0.5*\x+1.5)--(5,0.5*\x+1.5) ;
\draw[xshift=6.5cm, yshift=6.5cm, color=gray!30] (1,1) -- (5,1) -- (5,5) -- (1,5) -- cycle;
\draw[xshift=6.5cm, yshift=6.5cm, ultra thick] (3,1) -- (3,5);
\draw[xshift=6.5cm, yshift=6.5cm, ultra thick] (1,3) -- (5,3);


\node at (3,5.5) {Projection onto {$P_1,P_3$} subspace};

\node at (3,0.25) {$P_1$};
\node at (0.25,3) {$P_3$};

\foreach \i in {1,...,8}
\node at (0.75+0.5*\i,0.75) {\i};

\foreach \j in {1,...,8}
\node at (0.75,0.75+0.5*\j) {\j};

\draw[fill=blue!30] (2,1.5) -- (2.5,1.5) -- (2.5,2) -- (2,2) -- cycle;
\node at (2.25,1.75) {$1$};

\draw[fill=pink!50] (1,1) -- (1.5,1) -- (1.5,1.5) -- (1,1.5) -- cycle;
\node at (1.25,1.25) {$2$};

\draw[fill=green!30] (1.5,2) -- (2,2) -- (2,2.5) -- (1.5,2.5) -- cycle;
\node at (1.75,2.25) {$3$};

\draw[fill=red!100] (4,3.5) -- (4.5,3.5) -- (4.5,4) -- (4,4) -- cycle;
\node at (4.25,3.75) {$4$};

\draw[fill=blue!80] (3.5,2.5) -- (4,2.5) -- (4,3) -- (3.5,3) -- cycle;
\node at (3.75,2.75) {$5$};

\draw[fill=pink!150] (4.5,4) -- (5,4) -- (5,4.5) -- (4.5,4.5) -- cycle;
\node at (4.75,4.25) {$6$};

\draw[fill=darkgreen!100] (2.5,4.5) -- (3,4.5) -- (3,5) -- (2.5,5) -- cycle;
\node at (2.75,4.75) {$7$};

\draw[fill=darkred!100] (3,3) -- (3.5,3) -- (3.5,3.5) -- (3,3.5) -- cycle;
\node at (3.25,3.25) {$8$};

\foreach \x in {0,...,6}
	\draw[color=gray!30] (0.5*\x+1.5,1)--(0.5*\x+1.5,5) ;
\foreach \x in {0,...,6}	
	\draw[color=gray!30] (1,0.5*\x+1.5)--(5,0.5*\x+1.5) ;
\draw[color=gray!30] (1,1) -- (5,1) -- (5,5) -- (1,5) -- cycle;
\draw[ultra thick] (3,1) -- (3,5);
\draw[ultra thick] (1,3) -- (5,3);


\node [xshift=6.5cm] at (3,5.5) {Projection onto {$P_2,P_3$} subspace};

\node [xshift=6.5cm] at (3,0.25) {$P_2$};
\node [xshift=6.5cm] at (0.25,3) {$P_3$};

\foreach \i in {1,...,8}
\node [xshift=6.5cm] at (0.75+0.5*\i,0.75) {\i};

\foreach \j in {1,...,8}
\node [xshift=6.5cm] at (0.75,0.75+0.5*\j) {\j};

\draw[xshift=6.5cm, fill=blue!30] (1.5,1) -- (2,1) -- (2,1.5) -- (1.5,1.5) -- cycle;
\node [xshift=6.5cm] at (1.75,1.25) {$1$};

\draw[xshift=6.5cm, fill=pink!50] (2,2) -- (2.5,2) -- (2.5,2.5) -- (2,2.5) -- cycle;
\node [xshift=6.5cm] at (2.25,2.25) {$2$};

\draw[xshift=6.5cm, fill=green!30] (1,1.5) -- (1.5,1.5) -- (1.5,2) -- (1,2) -- cycle;
\node [xshift=6.5cm] at (1.25,1.75) {$3$};

\draw[xshift=6.5cm, fill=red!100] (4,4.5) -- (4.5,4.5) -- (4.5,5) -- (4,5) -- cycle;
\node [xshift=6.5cm] at (4.25,4.75) {$4$};

\draw[xshift=6.5cm, fill=blue!80] (4.5,3) -- (5,3) -- (5,3.5) -- (4.5,3.5) -- cycle;
\node [xshift=6.5cm] at (4.75,3.25) {$5$};

\draw[xshift=6.5cm, fill=pink!150] (3,2.5) -- (3.5,2.5) -- (3.5,3) -- (3,3) -- cycle;
\node [xshift=6.5cm] at (3.25,2.75) {$6$};

\draw[xshift=6.5cm, fill=darkgreen!100] (2.5,3.5) -- (3,3.5) -- (3,4) -- (2.5,4) -- cycle;
\node [xshift=6.5cm] at (2.75,3.75) {$7$};

\draw[xshift=6.5cm, fill=darkred!100] (3.5,4) -- (4,4) -- (4,4.5) -- (3.5,4.5) -- cycle;
\node [xshift=6.5cm] at (3.75,4.25) {$8$};

\foreach \x in {0,...,6}
	\draw[xshift=6.5cm, color=gray!30] (0.5*\x+1.5,1)--(0.5*\x+1.5,5) ;
\foreach \x in {0,...,6}	
	\draw[xshift=6.5cm, color=gray!30] (1,0.5*\x+1.5)--(5,0.5*\x+1.5) ;
\draw[xshift=6.5cm, color=gray!30] (1,1) -- (5,1) -- (5,5) -- (1,5) -- cycle;
\draw[xshift=6.5cm, ultra thick] (3,1) -- (3,5);
\draw[xshift=6.5cm, ultra thick] (1,3) -- (5,3);

\end{tikzpicture}
}

&\,&

\scalebox{0.55}
{
\begin{tikzpicture}[fill=gray!50, scale=1,vertex/.style={circle,inner sep=1.5,color=blue,fill=blue,draw}]

\draw [yshift=6.5cm, color=gray!30] (2,1) -- (4,2) -- (4,4.5);
\draw [yshift=6.5cm, color=gray!30] (2,4.5) -- (2,2) -- (4,1);
\draw [yshift=6.5cm, color=gray!30] (3,2.5) -- (3,5);
\draw [yshift=6.5cm, color=gray!30] (1,1.5) -- (3,2.5) -- (5,1.5);
\draw [yshift=6.5cm, color=gray!30] (1,2.75) -- (3,3.75) -- (5,2.75);
\draw [yshift=6.5cm, color=gray!30] (1,4) -- (3,5) -- (5,4) -- (5,1.5);
\draw [yshift=6.5cm, ultra thick] (1,4) -- (1,1.5) -- (3,0.5) -- (5,1.5);

\node [yshift=6.5cm] at (3,5.5) {LHT2 8 sample points};

\node [yshift=6.5cm] at (3.25,0.25) {0};
\node [yshift=6.5cm] at (4,0.75) {4};
\node [yshift=6.5cm] at (5,1.25) {8};
\node [yshift=6.5cm] at (4.5,0.5) {$P_1$};

\node [yshift=6.5cm] at (2.75,0.25) {0};
\node [yshift=6.5cm] at (2,0.75) {4};
\node [yshift=6.5cm] at (1,1.25) {8};
\node [yshift=6.5cm] at (1.5,0.5) {$P_2$};

\node [yshift=6.5cm] at (0.75,1.5) {0};
\node [yshift=6.5cm] at (0.75,2.75) {4};
\node [yshift=6.5cm] at (0.75,4) {8};
\node [yshift=6.5cm] at (0.25,2.75) {$P_3$};

\node [yshift=6.5cm] at (2.5,1.625) [vertex]{};
\node [yshift=6.5cm] at (2.5,3.125) [vertex]{};
\node [yshift=6.5cm] at (2.5,2.4375) [vertex]{};
\node [yshift=6.5cm] at (2.25,4.375) [vertex]{};
\node [yshift=6.5cm] at (4,1.5625) [vertex]{};
\node [yshift=6.5cm] at (4,3.6875) [vertex]{};
\node [yshift=6.5cm] at (2.75,3.625) [vertex]{};
\node [yshift=6.5cm] at (3.5,3.8125) [vertex]{};


\xdefinecolor{darkgreen}{RGB}{143,188,143}
\xdefinecolor{darkred}{RGB}{139,0,0}

\node [xshift=6.5cm, yshift=6.5cm] at (3,5.5) {Projection onto {$P_1,P_2$} subspace};

\node [xshift=6.5cm, yshift=6.5cm] at (3,0.25) {$P_1$};
\node [xshift=6.5cm, yshift=6.5cm] at (0.25,3) {$P_2$};

\foreach \i in {1,...,8}
\node [xshift=6.5cm, yshift=6.5cm] at (0.75+0.5*\i,0.75) {\i};

\foreach \j in {1,...,8}
\node [xshift=6.5cm, yshift=6.5cm] at (0.75,0.75+0.5*\j) {\j};

\draw[xshift=6.5cm, yshift=6.5cm, fill=blue!30] (3,1) -- (3.5,1) -- (3.5,1.5) -- (3,1.5) -- cycle;
\node [xshift=6.5cm, yshift=6.5cm] at (3.25,1.25) {$1$};

\draw[xshift=6.5cm, yshift=6.5cm, fill=pink!50] (1,2) -- (1.5,2) -- (1.5,2.5) -- (1,2.5) -- cycle;
\node [xshift=6.5cm, yshift=6.5cm] at (1.25,2.25) {$2$};

\draw[xshift=6.5cm, yshift=6.5cm, fill=green!30] (2,3) -- (2.5,3) -- (2.5,3.5) -- (2,3.5) -- cycle;
\node [xshift=6.5cm, yshift=6.5cm] at (2.25,3.25) {$3$};

\draw[xshift=6.5cm, yshift=6.5cm, fill=red!100] (4,4.5) -- (4.5,4.5) -- (4.5,5) -- (4,5) -- cycle;
\node [xshift=6.5cm, yshift=6.5cm] at (4.25,4.75) {$4$};

\draw[xshift=6.5cm, yshift=6.5cm, fill=blue!80] (4.5,3.5) -- (5,3.5) -- (5,4) -- (4.5,4) -- cycle;
\node [xshift=6.5cm, yshift=6.5cm] at (4.75,3.75) {$5$};

\draw[xshift=6.5cm, yshift=6.5cm, fill=pink!150] (1.5,2.5) -- (2,2.5) -- (2,3) -- (1.5,3) -- cycle;
\node [xshift=6.5cm, yshift=6.5cm] at (1.75,2.75) {$6$};

\draw[xshift=6.5cm, yshift=6.5cm, fill=darkgreen!100] (3.5,1.5) -- (4,1.5) -- (4,2) -- (3.5,2) -- cycle;
\node [xshift=6.5cm, yshift=6.5cm] at (3.75,1.75) {$7$};

\draw[xshift=6.5cm, yshift=6.5cm, fill=darkred!100] (2.5,4) -- (3,4) -- (3,4.5) -- (2.5,4.5) -- cycle;
\node [xshift=6.5cm, yshift=6.5cm] at (2.75,4.25) {$8$};

\foreach \x in {0,...,6}
	\draw[xshift=6.5cm, yshift=6.5cm, color=gray!30] (0.5*\x+1.5,1)--(0.5*\x+1.5,5) ;
\foreach \x in {0,...,6}	
	\draw[xshift=6.5cm, yshift=6.5cm, color=gray!30] (1,0.5*\x+1.5)--(5,0.5*\x+1.5) ;
\draw[xshift=6.5cm, yshift=6.5cm, color=gray!30] (1,1) -- (5,1) -- (5,5) -- (1,5) -- cycle;
\draw[xshift=6.5cm, yshift=6.5cm, ultra thick] (3,1) -- (3,5);
\draw[xshift=6.5cm, yshift=6.5cm, ultra thick] (1,3) -- (5,3);


\node at (3,5.5) {Projection onto {$P_1,P_3$} subspace};

\node at (3,0.25) {$P_1$};
\node at (0.25,3) {$P_3$};

\foreach \i in {1,...,8}
\node at (0.75+0.5*\i,0.75) {\i};

\foreach \j in {1,...,8}
\node at (0.75,0.75+0.5*\j) {\j};

\draw[fill=blue!30] (3,1) -- (3.5,1) -- (3.5,1.5) -- (3,1.5) -- cycle;
\node at (3.25,1.25) {$1$};

\draw[fill=pink!50] (3.5,4) -- (4,4) -- (4,4.5) -- (3.5,4.5) -- cycle;
\node at (3.75,4.25) {$2$};

\draw[fill=green!30] (1,1.5) -- (1.5,1.5) -- (1.5,2) -- (1,2) -- cycle;
\node at (1.25,1.75) {$3$};

\draw[fill=red!100] (1.5,3.5) -- (2,3.5) -- (2,4) -- (1.5,4) -- cycle;
\node at (1.75,3.75) {$4$};

\draw[fill=blue!80] (2,2) -- (2.5,2) -- (2.5,2.5) -- (2,2.5) -- cycle;
\node at (2.25,2.25) {$5$};

\draw[fill=pink!150] (4.5,3) -- (5,3) -- (5,3.5) -- (4.5,3.5) -- cycle;
\node at (4.75,3.25) {$6$};

\draw[fill=darkgreen!100] (2.5,4.5) -- (3,4.5) -- (3,5) -- (2.5,5) -- cycle;
\node at (2.75,4.75) {$7$};

\draw[fill=darkred!100] (4,2.5) -- (4.5,2.5) -- (4.5,3) -- (4,3) -- cycle;
\node at (4.25,2.75) {$8$};

\foreach \x in {0,...,6}
	\draw[color=gray!30] (0.5*\x+1.5,1)--(0.5*\x+1.5,5) ;
\foreach \x in {0,...,6}	
	\draw[color=gray!30] (1,0.5*\x+1.5)--(5,0.5*\x+1.5) ;
\draw[color=gray!30] (1,1) -- (5,1) -- (5,5) -- (1,5) -- cycle;
\draw[ultra thick] (3,1) -- (3,5);
\draw[ultra thick] (1,3) -- (5,3);


\node [xshift=6.5cm] at (3,5.5) {Projection onto {$P_2,P_3$} subspace};

\node [xshift=6.5cm] at (3,0.25) {$P_2$};
\node [xshift=6.5cm] at (0.25,3) {$P_3$};

\foreach \i in {1,...,8}
\node [xshift=6.5cm] at (0.75+0.5*\i,0.75) {\i};

\foreach \j in {1,...,8}
\node [xshift=6.5cm] at (0.75,0.75+0.5*\j) {\j};

\draw[xshift=6.5cm, fill=blue!30] (2,1.5) -- (2.5,1.5) -- (2.5,2) -- (2,2) -- cycle;
\node [xshift=6.5cm] at (2.25,1.75) {$1$};

\draw[xshift=6.5cm, fill=pink!50] (2.5,3.5) -- (3,3.5) -- (3,4) -- (2.5,4) -- cycle;
\node [xshift=6.5cm] at (2.75,3.75) {$2$};

\draw[xshift=6.5cm, fill=green!30] (3,2) -- (3.5,2) -- (3.5,2.5) -- (3,2.5) -- cycle;
\node [xshift=6.5cm] at (3.25,2.25) {$3$};

\draw[xshift=6.5cm, fill=red!100] (4,4.5) -- (4.5,4.5) -- (4.5,5) -- (4,5) -- cycle;
\node [xshift=6.5cm] at (4.25,4.75) {$4$};

\draw[xshift=6.5cm, fill=blue!80] (1,1) -- (1.5,1) -- (1.5,1.5) -- (1,1.5) -- cycle;
\node [xshift=6.5cm] at (1.25,1.25) {$5$};

\draw[xshift=6.5cm, fill=pink!150] (1.5,4) -- (2,4) -- (2,4.5) -- (1.5,4.5) -- cycle;
\node [xshift=6.5cm] at (1.75,4.25) {$6$};

\draw[xshift=6.5cm, fill=darkgreen!100] (4.5,2.5) -- (5,2.5) -- (5,3) -- (4.5,3) -- cycle;
\node [xshift=6.5cm] at (4.75,2.75) {$7$};

\draw[xshift=6.5cm, fill=darkred!100] (3.5,3) -- (4,3) -- (4,3.5) -- (3.5,3.5) -- cycle;
\node [xshift=6.5cm] at (3.75,3.25) {$8$};

\foreach \x in {0,...,6}
	\draw[xshift=6.5cm, color=gray!30] (0.5*\x+1.5,1)--(0.5*\x+1.5,5) ;
\foreach \x in {0,...,6}	
	\draw[xshift=6.5cm, color=gray!30] (1,0.5*\x+1.5)--(5,0.5*\x+1.5) ;
\draw[xshift=6.5cm, color=gray!30] (1,1) -- (5,1) -- (5,5) -- (1,5) -- cycle;
\draw[xshift=6.5cm, ultra thick] (3,1) -- (3,5);
\draw[xshift=6.5cm, ultra thick] (1,3) -- (5,3);

\end{tikzpicture}
}
\end{tabular}
\end{center}
%
\end{figure}
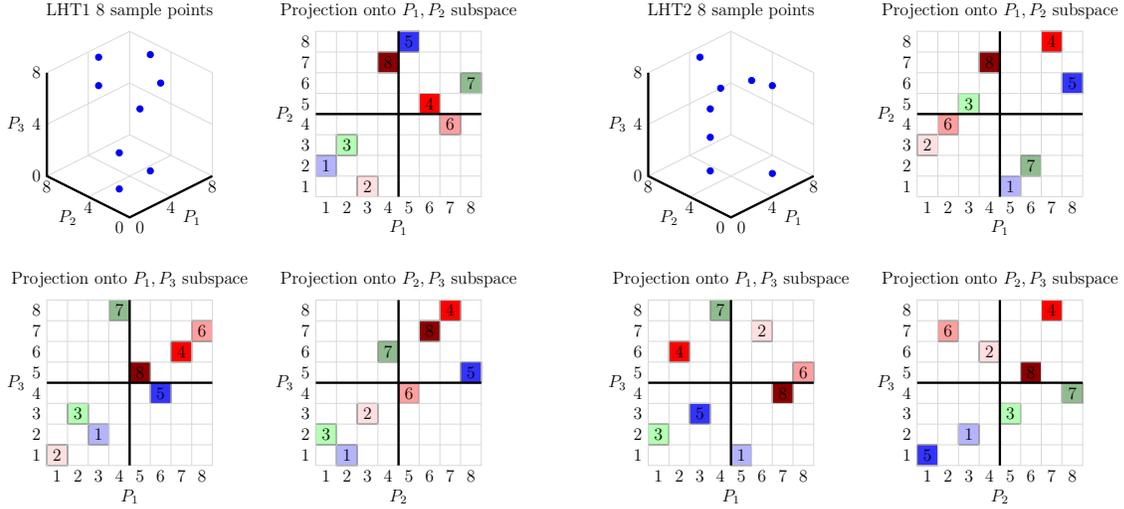

Collectively the union of LHT1 and LHT2 forms a LHS of $k=2$ trials.

While LHT1 and LHT2 are both examples of Latin Hypercube $3$-trials they exhibit different properties. The 2 dimensional subspace defined by  each of the pairs of variables 1 and 2, variables 1 and 3 and variables 2 and 3 can be partitioned into four equally sized sub-blocks as shown by the thicker lines in Figure \ref{LHS}. In LHT2 we see that the $3$-tuples (points) are evenly distributed across the four sub-blocks, while this is not the case in LHT1.

The $3$-trial LHT2 is an example of a specific space filling design known as an Orthogonal $d$-trial, where the sample points  achieve uniformity on the bivariate margins.

With this example in mind it is useful to have a formal definition for sub-blocks, Orthogonal $d$-trials and Orthogonal samples.

Let $n=p^d$ for some $p\in \Bbb N$. A $d$ dimensional parameter space (the set of all $n^d=p^{d^2}$ $d$-tuples), where each variable takes $n=p^d$  values, may be partitioned into $p^d$ {\em sub-blocks} each of which contain $p^{d^2}/p^d=p^{d(d-1)}$ points ($d$-tuples); that is,  for each $(p_1,p_2,p_3,\dots, p_d)\in [p]^d$, the set of $p^{d(d-1)}$ ordered $d$-tuples
\begin{align*}
SB_{(p_1,\dots,p_d)}&=\{((p_1,x_1),(p_2,x_2),\dots,(p_d,x_d))\mid x_i\in [p^{d-1}]\}
\end{align*}
defines a {\em sub-block}. Note that $(p_i,x_i)$ is interpreted as $(p_i-1)p^{d-1}+x_i$ and, in our examples, $p=2$.

 A Latin Hypercube $d$-trial is said to be an Orthogonal $d$-trial if the $n$ $d$-tuples are distributed evenly across all sub-blocks.
Formally, a Latin Hypercube $d$-trial  $H$ is said to be an {\em Orthogonal $d$-trial}  if $n=p^d$  and for each of the $p^d$ $d$-tuples of the form $(p_1,p_2,\dots,p_d)$, where $1\leq p_i\leq p$, there exists an element of $H$ of the form
 $((p_1,x_1),(p_2,x_2),\dots,(p_d,x_d))$, where $1\leq x_i\leq p^{d-1}$ and $(p_i,x_i)$ is interpreted as $(p_i-1)p^{d-1}+x_i$. Thus
an Orthogonal $d$-trial on $n=p^d$ values may be  represented as an $n$ by $d$ matrix where each entry is an ordered pair $(x,y)\in [p]\times [p^{d-1}]$. Further, when the matrix entries are restricted to the first coordinates,  all $p^d$ $d$-tuples on the set $[p]$ are covered and when the rows of matrix are partitioned according to the first coordinate, for each partition, the second coordinate  forms an arbitrary permutation of $[p^{d-1}]$ (that is, we have $p$ arbitrary permutations on the set $[ p^{d-1}]$).

In the above example the entries of LHT1 and LHT2 have been rewritten as ordered pairs in LHT3 and LHT4, respectively, and it is easy to see that LHT4 (LHT2) is an Orthogonal $3$-trial, while LHT3 (LHT1) is not.

\section{Results}
\label{sec:theory_results}

Here  we give  theoretical arguments that  calculate the expected coverage of the parameter space when taking the union of $k$ $d$-trials. To achieve this we begin by using combinatorial techniques to count the expected intersection sizes for a multiset of $m$ LH $d$-trials. These arguments are presented  below and then extended  to the expected coverage based on Orthogonal $d$-trials.

\subsection{The expected intersection size   of LH $d$-trials}\label{subsec:theory-lhs}


Because each coordinate in a LH $d$-trial contains each element of $[n]$ exactly once and a LH $d$-trial is invariant under row permutations, the number of LH $d$-trials on $[n]$ is
$n !^{\,d-1}$.

Let ${\cal M}$ be the set of all selections of $m$ LH $d$-trials (with repetition retained in each of the selections, so ${\cal M}$ is a set of multisets each of size $m$).
The number of ways to choose $q$ elements from a set of size $p$, with repetition, is ${p+q-1\choose q}={p+q-1\choose p-1},$ so
 \begin{align}\label{trial-repallowed}
|{\cal M}|={n!^{d-1}+m-1\choose m}.
\end{align}

\begin{theorem}\label{int-LHT} Let $M$ be a  multiset of $m$ LH $d$-trails on $[n]$; that is $M\in {\cal M}$. The expected number of ordered $d$-tuples common to all   $m$ LH $d$-trials in $M$ is given by
\begin{align}
x_m(n)=n^{d}{(n-1)!^{\,d-1}+m-1\choose m}\left/{n!^{\,d-1}+m-1\choose m}\right..\label{xmndtuple}
\end{align}
\end{theorem}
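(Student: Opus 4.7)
The plan is to combine linearity of expectation with a symmetry argument. For each ordered $d$-tuple $t=(t_1,\dots,t_d)\in[n]^d$ and each multiset $M\in\mathcal{M}$, define the indicator $X_t(M)=1$ if $t$ is contained in every LH $d$-trial listed in $M$ and $0$ otherwise. Drawing $M$ uniformly from $\mathcal{M}$, the quantity sought is $x_m(n)=\sum_{t\in[n]^d}\Pr[X_t=1]$. The group $S_n^d$ acts on the set of all LH $d$-trials by independently permuting the values in each coordinate; this action is transitive on $[n]^d$ and preserves $\mathcal{M}$ (it takes multisets of LH $d$-trials to multisets of LH $d$-trials), so $\Pr[X_t=1]$ does not depend on $t$. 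Hence the sum collapses to $x_m(n)=n^d\cdot\Pr[X_t=1]$ for any single fixed $t$.

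The second step is to count the LH $d$-trials that contain a fixed tuple $t=(t_1,\dots,t_d)$. A LH $d$-trial is an equivalence class, under row permutation, of $n\times d$ matrices whose columns are permutations of $[n]$, and since $t_i$ occurs exactly once in column $i$, the tuple $t$ appears as at most one row of any such matrix. Designating row $1$ to equal $t$ leaves in each column $i$ an arbitrary permutation of $[n]\setminus\{t_i\}$ in the remaining $n-1$ rows, producing $((n-1)!)^d$ matrices. Multiplying by $n$ for the choice of distinguished row and dividing by $n!$ to pass from matrices to equivalence classes yields $((n-1)!)^{\,d-1}$ LH $d$-trials containing $t$; this is consistent with the total count $(n!)^{d-1}$ of LH $d$-trials quoted before \eref{trial-repallowed}.

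The conclusion then follows by assembling these pieces. A uniformly drawn $M\in\mathcal{M}$ has every member containing $t$ precisely when each of its $m$ entries is selected from the sub-collection of $((n-1)!)^{\,d-1}$ LH $d$-trials through $t$, so by the same multiset-counting identity used for $|\mathcal{M}|$ the number of such $M$ is $\binom{(n-1)!^{\,d-1}+m-1}{m}$. Dividing by $|\mathcal{M}|=\binom{n!^{\,d-1}+m-1}{m}$ and multiplying by the $n^d$ choices of tuple produces \eref{xmndtuple}. The only real obstacle is the matrix-to-multiset bookkeeping in the middle step; once one is careful to quotient out row permutations correctly, the remainder of the proof is an entirely routine application of linearity of expectation together with the standard identity $\binom{p+q-1}{q}$ for the number of $q$-element multisets drawn from a $p$-element set.
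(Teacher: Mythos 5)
Your proposal is correct and follows essentially the same route as the paper: identify the $(n-1)!^{\,d-1}$ trials through a fixed tuple, count the multisets drawn from that sub-collection, and sum over the $n^d$ tuples (the paper phrases the linearity-of-expectation step as the double count $\sum_{M}c(M)=\sum_{\mathbf{a}}t_{\mathbf{a}}$). Your added $S_n^d$-symmetry argument and the explicit matrix-to-multiset verification of the count $(n-1)!^{\,d-1}$ are harmless extra detail rather than a different method.
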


\begin{proof}

Fix a $d$-tuple $\mathbf{a}=(a_1,a_2,\dots,a_d)\in [n]^d$. There are $(n-1)!^{\,d-1}$ $d$-trials that contain this $d$-tuple. From this set the number of ways to choose, with repetition,  $m$  of these $d$-trials is
\begin{align*}
t_{\mathbf{a}}={(n-1)!^{\,d-1}+m-1\choose m}.
\end{align*}
That is, there are $t_{\mathbf{a}}$ choices of $m$ LH $d$-trials that have $\mathbf{a}$ in their intersection.
For $M\in {\cal M}$ denote the number of $d$-tuples common to all LH $d$-trials in $M$ by $c(M)$. Then
\begin{align*}
\sum_{M\in {\cal M}}c(M)=\sum_{\mathbf{a}\in [n]^d} t_{\mathbf{a}}=n^{d}{(n-1)!^{\,d-1}+m-1\choose m}.
\end{align*}
Hence the expected number ordered $d$-tuples common to all $m$ LH $d$-trials for an arbitrary $M\in {\cal M}$ is
\begin{align*}
\left(\sum_{M\in {\cal M}}c(M)\right)\frac{1}{|{\cal M}|}=n^{d}{(n-1)!^{\,d-1}+m-1\choose m}\left/{n!^{\,(d-1)}+m-1\choose m}\right..
\end{align*}

\end{proof}

\subsection{The  expected intersection size of Orthogonal $d$-trials}\label{subsec:theory-os}
For general $d$ we count the number of orthogonal $d$-trials. Thus the assumption is that $n=p^d$.

Let $H$ be an Orthogonal $d$-trial. Recall that for each $d$-tuple $(p_1,p_2,,\dots,p_d)\in [p]^d$ there is precisely one element of $H$ of the form
\begin{align*}
((p_1,x_1),(p_2,x_2),\dots,(p_d,x_d)),
 \end{align*}
 where $x_i\in [p^{d-1}]$. Further all elements of $H$ are of this form for some $(p_1,p_2,,\dots,p_d)\in [p]^d$.

 It will be useful to talk about individual coordinates in $H$ so for each $i=1,\dots, d$, let
\begin{align*}H_i(j)=&\{h\in H\mid \mbox{ the $i$-th coordinate of $h$ is $(j,x_i)$ for some }x_i\in[p^{d-1}]\}.
 \end{align*} Since $H$ is a LH $d$-trial, $|H|=p^d=n=p\cdot p^{d-1}$ and for  each $i=1,\dots, d$ and each $j=1,\dots, p$,  $|H_i(j)|=n/p=p^{d-1}$.

\begin{lemma}\label{lemA}
 The total number of Orthogonal $d$-trials on $[p]\times [p^{d-1}]$ is $(p^{d-1})!^{dp}$.
\end{lemma}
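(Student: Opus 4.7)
The plan is to exploit the matrix characterization of an Orthogonal $d$-trial given in the paragraph preceding the lemma: after fixing a canonical row ordering, an Orthogonal $d$-trial is encoded by $dp$ independent permutations of $[p^{d-1}]$, one for each pair (column index, first-coordinate value), so multiplying the independent choices yields $(p^{d-1})!^{dp}$.

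First I would normalize the representation. Since $H$ is a multiset invariant under row permutations, and since orthogonality forces each first-coordinate $d$-tuple in $[p]^d$ to appear as the first coordinates of exactly one row of $H$, we may order the $p^d$ rows uniquely by lexicographic order of their first-coordinate tuples. Two Orthogonal $d$-trials that share this canonical first-coordinate pattern but differ in any second coordinate are then distinct as multisets, since within a fixed trial each first-coordinate tuple pins down a unique row; so lex ordering gives a clean bijection (no overcounting by row permutation) between Orthogonal $d$-trials and their canonicalized second-coordinate arrays.

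Next I would parametrize the second coordinates. For each column $i \in \{1, \ldots, d\}$ and each $j \in \{1, \ldots, p\}$, the set $H_i(j)$ has size $p^{d-1}$ (as already noted in the excerpt). The Latin Hypercube condition on column $i$ means that, under the identification $(j, x) \leftrightarrow (j-1) p^{d-1} + x$, column $i$ is a permutation of $[p^d]$; this forces the second coordinates in column $i$ restricted to the rows of $H_i(j)$ to be a permutation of $[p^{d-1}]$, yielding $(p^{d-1})!$ free choices. The cells being permuted are disjoint as $(i, j)$ ranges over $[d] \times [p]$, so the $dp$ permutation choices are logically independent, and any such assignment reconstructs a valid Orthogonal $d$-trial: each column assembles $p$ permutations on disjoint intervals into a permutation of $[p^d]$, and the first-coordinate pattern is built to cover $[p]^d$ exactly once. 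The product of $dp$ factors of $(p^{d-1})!$ then gives the claimed count.

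The only real subtlety is the canonicalization step: without it one would need to deflate by the number of row permutations preserving the first-coordinate pattern, but orthogonality makes that stabilizer trivial (each tuple in $[p]^d$ appears once), so no deflation is required and the lex ordering produces a genuine bijection.
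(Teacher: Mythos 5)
Your proof is correct and follows essentially the same route as the paper's: the paper also parametrizes an Orthogonal $d$-trial by $dp$ independent bijections (one for each column $i$ and first-coordinate value $j$) between $[p^{d-1}]$ and $H_i(j)$, each chosen in $(p^{d-1})!$ ways. Your version is somewhat more careful in making the row-canonicalization explicit and in checking that every assignment of the $dp$ permutations reconstructs a valid trial, points the paper's terser argument leaves implicit.
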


 \begin{proof} Let $H$ be an Orthogonal $d$-trial. There are $n=p^d$ $d$-tuples in $H$.  Fix $i$ and $j$, where $1\leq i\leq d$ and $1\leq j\leq p$, and define a function $f_{ij}:[p^{d-1}]\rightarrow H_i(j)$, by
 \begin{align*}
 f_{ij}(y)=((p_1,x_1),(p_2,x_2),\dots,(j,y),\dots,(p_d,x_d)),
 \end{align*}
 where $(j,y)$ is the $i$-th coordinate. Since $f_{ij}$ is a one-to-one and onto function there are $(p^{d-1})!$ different functions to choose from and $dp$ choices for $i,j$ so  $(p^{d-1})!^{\,pd}$ possible $d$-trials.\end{proof}

\begin{lemma}\label{lemB} A fixed $d$-tuple, say  $((p_1,x_1),(p_2,x_2),\dots,(p_d,x_d))$, occurs in $p^{d(d-1)(p-1)}(p^{d-1}-1)!^{dp}$  orthogonal $d$-trials on $[p]\times [p^{d-1}]$.
\end{lemma}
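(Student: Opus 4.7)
The plan is to adapt the counting scheme from the proof of \lref{lemA}, tracking which permutation choices survive once we impose that the fixed $d$-tuple $\mathbf{a}=((p_1,x_1),(p_2,x_2),\dots,(p_d,x_d))$ belongs to the Orthogonal $d$-trial.

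Recall from \lref{lemA} that an Orthogonal $d$-trial on $[p]\times[p^{d-1}]$ is determined by $dp$ independent bijections: for each coordinate $i\in\{1,\dots,d\}$ and each slab label $j\in\{1,\dots,p\}$, the function $f_{ij}:[p^{d-1}]\to H_i(j)$ is an arbitrary permutation of $[p^{d-1}]$, independently contributing a factor of $(p^{d-1})!$ and yielding the overall count $(p^{d-1})!^{\,dp}$.

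First I would observe that the requirement $\mathbf{a}\in H$ imposes exactly $d$ constraints on these $dp$ bijections, namely that for each $i\in\{1,\dots,d\}$ the bijection $f_{i,p_i}$ must send $x_i$ to $\mathbf{a}$ (since $\mathbf{a}$ is the unique element of $H_{i}(p_i)$ whose $i$-th coordinate is $(p_i,x_i)$). Crucially, these $d$ conditions live on $d$ \emph{distinct} bijections, one for each coordinate $i$, with the slab index $j=p_i$ varying with $i$; they are therefore pairwise independent and no joint compatibility check is needed. Each constrained bijection now has $(p^{d-1}-1)!$ remaining choices, while each of the other $d(p-1)$ bijections $f_{ij}$ with $j\neq p_i$ is unrestricted and still contributes $(p^{d-1})!$. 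Multiplying, the number of Orthogonal $d$-trials containing $\mathbf{a}$ equals
\begin{align*}
(p^{d-1}-1)!^{\,d}\,(p^{d-1})!^{\,d(p-1)}.
\end{align*}

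The remaining step is purely algebraic: substitute $(p^{d-1})!=p^{d-1}\cdot(p^{d-1}-1)!$ into the second factor to pull out $(p^{d-1})^{d(p-1)}=p^{(d-1)d(p-1)}$, and then combine the factorial exponents via $d+d(p-1)=dp$. This recovers the stated count $p^{d(d-1)(p-1)}(p^{d-1}-1)!^{\,dp}$. The main point requiring care is justifying that the $d$ coordinate-wise constraints imposed by the single $d$-tuple $\mathbf{a}$ really do decouple into independent conditions on disjoint bijections $f_{i,p_i}$; once this independence is articulated, the rest is straightforward bookkeeping driven by the parametrisation used in \lref{lemA}.
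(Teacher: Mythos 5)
Your proof is correct, but it proceeds differently from the paper's. The paper counts incidences globally: it multiplies the total number of Orthogonal $d$-trials from Lemma~\ref{lemA} by the $n=p^d$ tuples each contains, and divides by the $n^d$ distinct $d$-tuples, invoking the symmetry that every $d$-tuple lies in the same number of trials. You instead reuse the parametrisation of Lemma~\ref{lemA} directly: an Orthogonal $d$-trial is specified by $dp$ independent bijections $f_{ij}$, the membership of $\mathbf{a}$ pins down one value of each of the $d$ distinct bijections $f_{i,p_i}$ (leaving $(p^{d-1}-1)!$ completions apiece) and leaves the remaining $d(p-1)$ bijections free, giving $(p^{d-1}-1)!^{\,d}\,(p^{d-1})!^{\,d(p-1)}$, which simplifies to the stated count. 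Both arguments are valid; your constructive count has the advantage of not needing the equidistribution claim, which the paper asserts with the phrase ``any two occur the same number of times'' but does not justify (it requires observing that independent relabellings of the fine labels within each coarse block act transitively on $d$-tuples while permuting the set of Orthogonal trials). The paper's averaging argument is shorter and generalises immediately to any family of trials with enough symmetry, whereas yours makes the independence structure explicit and lands directly on the factored form of the answer. Your identification of the decoupling of the $d$ constraints onto disjoint bijections is exactly the point that needs care, and you have articulated it correctly.
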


\begin{proof} By Lemma \ref{lemA} there are $(p^{d-1})!^{pd}$ Orthogonal $d$-trials, and each contains $n=p^d$ \mbox{$d$-tuples}. There are $n^d$ distinct $d$-tuples and any two occur the same number of times in the disjoint union of the $d$-trials. Hence a fixed $d$-tuple occurs in
 \begin{align*}
\frac{q!^{dp}\times n}{n^d}&=  (p^{d-1})!^{dp}/p^{d(d-1)}
=p^{d(d-1)(p-1)}(p^{d-1}-1)!^{dp}
\end{align*}
Orthogonal $d$-trials.
\end{proof}

Let ${\cal M}_o$ be the set of all selections of $m$ Orthogonal $d$-trials (so ${\cal M}_o$ is a set of multisets each of size $m$).  So by Lemma \ref{lemA}
 \begin{align*}
|{\cal M}_o|={(p^{d-1})!^{dp}+m-1\choose m}={(n/p)!^{dp}+m-1\choose m}.
\end{align*}

\begin{theorem}
\label{thm:int-OT}
Let $M$ be a multiset of $m$ Orthogonal $d$-trials on $[p]\times [p^{d-1}]$ where $n=p^d$; that is $M\in {\cal M}_o$. The expected number of ordered $d$-tuples common to all $m$ Orthogonal $d$-trials    is
\begin{align*}
x_m(n)=p^{d^2}{p^{d(d-1)(p-1)}(p^{d-1}-1)!^{dp}+m-1\choose m}\left/{(p^{d-1})!^{dp}+m-1\choose m}\right..
\end{align*}

\end{theorem}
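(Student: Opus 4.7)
The plan is to mirror the argument used in the proof of \tref{int-LHT}, with the combinatorial inputs supplied by \lref{lemA} and \lref{lemB} playing the roles that the two counts $n!^{d-1}$ and $(n-1)!^{d-1}$ played in the LH case. Since the two lemmas have already done the hard enumerative work, the proof should reduce to a bookkeeping exercise.

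First I would fix an arbitrary $d$-tuple $\mathbf{a}=((p_1,x_1),\dots,(p_d,x_d))\in [p]\times[p^{d-1}]$. By \lref{lemB}, the number of orthogonal $d$-trials containing $\mathbf{a}$ is $T=p^{d(d-1)(p-1)}(p^{d-1}-1)!^{dp}$. The number of size-$m$ multisets drawn from this collection of $T$ trials (repetition allowed) is therefore
\begin{align*}
t_{\mathbf{a}} = \binom{T+m-1}{m} = \binom{p^{d(d-1)(p-1)}(p^{d-1}-1)!^{dp}+m-1}{m},
\end{align*}
and by symmetry this count is the same for every $\mathbf{a}$.

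Next, for $M\in\mathcal{M}_o$ let $c(M)$ denote the number of $d$-tuples common to all trials in $M$. Swapping the order of summation in the usual way,
\begin{align*}
\sum_{M\in\mathcal{M}_o} c(M) = \sum_{\mathbf{a}} t_{\mathbf{a}} = p^{d^2}\, t_{\mathbf{a}},
\end{align*}
because there are exactly $p^{d^2}=n^d$ distinct $d$-tuples in $[p]\times[p^{d-1}]$. Dividing by $|\mathcal{M}_o|=\binom{(p^{d-1})!^{dp}+m-1}{m}$ (from \lref{lemA} together with the repetition-allowed formula recorded in \eref{trial-repallowed}) yields the claimed expression for $x_m(n)$.

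The only place one might hesitate is justifying that every $d$-tuple is contained in exactly $T$ orthogonal trials; but this homogeneity is precisely the content of \lref{lemB}, which was proved using the transitivity of the relevant symmetry group together with the $n\times(p^{d-1})!^{dp}/n^d$ averaging argument. Consequently the main obstacle is not in this proof at all but was handled upstream in \lref{lemB}, and the theorem follows by the direct double-counting template above.
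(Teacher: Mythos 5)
Your proposal is correct and is essentially the paper's own argument: the paper's proof of this theorem simply says to repeat the proof of Theorem~\ref{int-LHT} with ${\cal M}_o$ in place of ${\cal M}$ and with Lemma~\ref{lemB} supplying the count of Orthogonal $d$-trials through a fixed $d$-tuple, which is exactly the double-counting template you carry out. The only cosmetic slip is writing $\mathbf{a}\in[p]\times[p^{d-1}]$ rather than $\mathbf{a}\in([p]\times[p^{d-1}])^d$; the count $p^{d^2}=n^d$ of such tuples is right.
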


\begin{proof}
The proof follows as in the proof of Theorem \ref{int-LHT}, except that we consider ${\cal M}_o$ instead of ${\cal M}$ and the number of Orthogonal $d$-trials that intersect in a fixed $d$-tuple as established in Lemma \ref{lemB}.




\end{proof}

\subsection{The  expected size of  edgewise intersection of LH $d$-trials}
Let $1\leq i<j\leq d$. An $(i,j)$-{\em edge} of a $d$-tuple $\mathbf{a}=(a_1,a_2,\dots, a_d)$  is an ordered pair $(a_i,a_j)$. Two Latin Hypercube $d$-trials, $H_1$ and $H_2$,  are said to intersect in an $(i,j)$-edge $(a_i,a_j)$, if there exists  $(a_1,a_2,\dots,a_d)\in H_1$ and $(a_1^\prime,a_2^\prime,\dots,a_d^\prime)\in H_2$, such that $a_i=a_i^\prime$ and $a_j=a_j^\prime$.

There are ${d\choose 2}$ edges in a $d$-tuple, so the total number of possible distinct edges is $n^2{d\choose 2}$. In addition, there are $n$ $d$-tuples in a LH $d$-trial, so there are
 $n{d\choose 2}$ edges in total in a $d$-trial.

 \begin{lemma}\label{lemC}
 A fixed $(i,j)$-edge $(a_i,a_j)$ is contained in $(n-1)!n!^{d-2}$ distinct LH $d$-trials.
 \end{lemma}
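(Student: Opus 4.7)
The plan is to count ordered matrices first, and then pass to LH $d$-trials (which are invariant under row permutations) by dividing by $n!$. Without loss of generality relabel so that $(i,j)=(1,2)$. Consider the set $\mathcal{R}$ of all ordered $n\times d$ matrices whose every column is a permutation of $[n]$; since the columns are chosen independently, $|\mathcal{R}|=n!^{\,d}$. Each LH $d$-trial (viewed as a multiset of $d$-tuples) is obtained from exactly $n!$ elements of $\mathcal{R}$, because the $n$ rows of any such matrix are pairwise distinct (their first coordinates are the distinct elements of $[n]$), so the symmetric group $S_n$ acts freely on $\mathcal{R}$ by row permutation. This recovers the count $n!^{\,d-1}$ of LH $d$-trials stated earlier in the paper.

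Next I would count the subset $\mathcal{R}(a_i,a_j)\subseteq\mathcal{R}$ of ordered matrices in which some row has $a_i$ in column $i$ and $a_j$ in column $j$. The key observation is that in any matrix of $\mathcal{R}$ the value $a_i$ appears in column $i$ in \emph{exactly one} row (since column $i$ is a permutation of $[n]$), so the ``some row'' in the definition of $\mathcal{R}(a_i,a_j)$ is automatically unique. This makes the following enumeration clean: pick the row $r$ in which the edge occurs ($n$ choices); column $i$ must be a permutation of $[n]$ with $a_i$ fixed in position $r$, giving $(n-1)!$ choices; symmetrically, $(n-1)!$ choices for column $j$; and the remaining $d-2$ columns are arbitrary permutations of $[n]$, contributing $n!^{\,d-2}$. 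Thus
\begin{align*}
|\mathcal{R}(a_i,a_j)| \;=\; n\cdot(n-1)!\cdot(n-1)!\cdot n!^{\,d-2} \;=\; n!\cdot(n-1)!\cdot n!^{\,d-2}.
\end{align*}

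Finally, because the property ``contains the edge $(a_i,a_j)$ in columns $i,j$'' is invariant under row permutation, $\mathcal{R}(a_i,a_j)$ is a union of $S_n$-orbits of size $n!$, and the number of LH $d$-trials containing the edge is $|\mathcal{R}(a_i,a_j)|/n! = (n-1)!\,n!^{\,d-2}$, as claimed. The only point that requires care is the uniqueness of the row containing the edge, which is what prevents overcounting when we sum over $r$; apart from that, the argument is a direct counting calculation parallel in spirit to the one used in the proof of Theorem \ref{int-LHT}.
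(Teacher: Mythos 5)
Your proof is correct, but it takes a genuinely different route from the paper's. The paper's proof is a one-line double count: it multiplies the number of LH $d$-trials, $n!^{\,d-1}$, by the number $n\binom{d}{2}$ of edges carried by each trial, and divides by the total number $n^2\binom{d}{2}$ of distinct edges, implicitly using the fact that, by symmetry (relabelling the values in each coordinate and permuting coordinates), every edge lies in the same number of trials. You instead count directly: you lift to ordered $n\times d$ matrices whose columns are permutations, enumerate those containing the fixed edge as $n\cdot(n-1)!^{2}\cdot n!^{\,d-2}$, and quotient by the free row-permutation action of $S_n$. Both arguments are valid and give the same answer. The paper's is shorter but quietly relies on two facts that you make explicit: that for a fixed pair $(i,j)$ the $n$ rows of a trial yield $n$ \emph{distinct} $(i,j)$-edges (this is exactly your uniqueness-of-the-row observation, needed so that $n\binom{d}{2}$ really counts distinct edges), and that the count is the same for every edge. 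Your version needs no symmetry appeal, and it extends verbatim to counting the trials containing a fixed $t$-subtuple (giving $(n-1)!^{\,t-1}n!^{\,d-t}$), which is the generalization alluded to in the Remark following Corollary \ref{thm:int-edges}.
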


 \begin{proof}
Multiplying the number of distinct LH $d$-trials by the number of edges in a LH $d$-trial and dividing by the total number of distinct edges  counts the number of LH $d$-trials that contain a fixed $(i,j)$-edge; that is,

\begin{align*}
\frac{n!^{\,d-1}\times n{d\choose 2}}{n^2{d\choose 2}}=(n-1)! n!^{\,d-2}.
\end{align*}
\end{proof}

We now count the expected  number of edges common to all LH $d$-trials from a selection $M\in {\cal M}$.

\begin{theorem}\label{th:x2n} Let $M$ represent a multiset of $m$ LH $d$-trials on $[n]$; that is, $M\in {\cal M}.$ Then the expected number of edges common to all $m$ LH $d$-trials in $M$ is
\begin{align}
x_{m}(n)=n^t{d\choose t}{(n-1)!^{\,d-1}n^{d-2}+m-1\choose m}\left/{n!^{\,d-1}+m-1\choose m}, \quad t=2\right.;\label{x2n}
\end{align}
that is, $x_m(n)$ is the expected intersection in the projection to a subspace of dimension $t=2$\end{theorem}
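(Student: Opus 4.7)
The proof will mirror the double-counting argument used in \tref{int-LHT}, with $(i,j)$-edges playing the role of $d$-tuples. The plan is to enumerate the incidence structure between multisets $M\in\mathcal{M}$ and the set of possible edges, count it from the edge side using \lref{lemC}, and then divide through by $|\mathcal{M}|$ from \eref{trial-repallowed}.

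First I would tally the total number of distinct edges: $\binom{d}{2}$ coordinate pairs $(i,j)$ times $n^2$ value pairs $(a_i,a_j)$, giving $n^2\binom{d}{2}$ in all, which matches the prefactor $n^t\binom{d}{t}$ at $t=2$. Next, fix any one such edge $e$. By \lref{lemC} it lies in exactly $(n-1)!\,n!^{\,d-2}$ distinct LH $d$-trials, so by the multiset formula $\binom{p+q-1}{q}$ used to derive \eref{trial-repallowed}, the number of selections $M\in\mathcal{M}$ in which every one of the $m$ chosen trials contains $e$ equals $\binom{(n-1)!\,n!^{\,d-2}+m-1}{m}$. Letting $c(M)$ denote the number of edges common to all trials of $M$, swapping the order of summation gives $\sum_{M\in\mathcal{M}}c(M)=n^2\binom{d}{2}\binom{(n-1)!\,n!^{\,d-2}+m-1}{m}$, since each of the $n^2\binom{d}{2}$ edges contributes the same count. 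Dividing by $|\mathcal{M}|$ yields the desired expected value.

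The only non-trivial step is then the algebraic identity $(n-1)!\,n!^{\,d-2}=n^{d-2}(n-1)!^{\,d-1}$, obtained at once from $n!=n\cdot(n-1)!$; this puts the upper binomial into the form displayed in \eref{x2n}. Rather than a genuine obstacle, the main care needed is bookkeeping: edges are doubly indexed by a coordinate pair $(i,j)$ and a value pair $(a_i,a_j)$, so one must not conflate the count $n^t\binom{d}{t}$ with $n^t$ alone, nor mis-absorb the $\binom{d}{t}$ into the binomial coefficients that carry the combinatorial content. Everything else is a direct transcription of the argument already established in \tref{int-LHT}.
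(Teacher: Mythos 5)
Your proposal is correct and follows essentially the same route as the paper's own proof: fix an $(i,j)$-edge, apply Lemma \ref{lemC} to count the $(n-1)!\,n!^{\,d-2}$ trials containing it, use the multiset-selection formula, sum over the $n^2\binom{d}{2}$ edges, divide by $|{\cal M}|$, and finish with the identity $(n-1)!\,n!^{\,d-2}=(n-1)!^{\,d-1}n^{d-2}$. No gaps.
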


\begin{proof}
The case $d=2$ is covered in Theorem \ref{int-LHT}.  For general $d$ we fix an $(i,j)$-edge, say $(a_i,a_j)$. By Lemma \ref{lemC} there are  $(n-1)!n!^{d-2}$ LH $d$-trials that contain this edge. From this set the number of ways to choose, with repetition,  $m$ of these LH  $d$-trials is
\begin{align*}
s_{(a_i,a_j)}={(n-1)!(n!)^{d-2}+m-1\choose m}.
\end{align*}
That is, there are $s_{(a_i,a_j)}$ choices of $m$ LH $d$-trials that intersect in the fixed $(i,j)$-edge $(a_i,a_j)$.

For $M\in {\cal M}$ denote the number of edges common to all LH $d$-trials by $c(M)$. Then, denoting the set of $(i,j)$-edges by $E$,
\begin{align*}
\sum_{M\in {\cal M}} c(M)=\sum_{1\leq i<j\leq d}\left(\sum_{(a_i,a_j)\in E}s_{(a_i,a_j)} \right)=n^2{d\choose 2}{(n-1)!(n!)^{d-2}+m-1\choose m}.
\end{align*}
Hence the expected number of edges common to all $m$ LH $d$-trials for an arbitrary $M\in {\cal M}$  is
\begin{align*}
\left(\sum_{M\in {\cal M}} c(M)\right)\frac{1}{|{\cal M}|}=n^2{d\choose 2}{(n-1)!n!^{d-2}+m-1\choose m} \left/ {(n!)^{d-1}+m-1\choose m}\right..
\end{align*}
The result follows by noting that $(n-1)!n!^{d-2}= (n-1)!^{d-1}n^{d-2}.$
\end{proof}

In \cite{BBDT} the authors used MATLAB simulations to test the percentage coverage of $t$ dimensional subspaces at the sub-block level, where we recall that
 for a $d$ dimensional parameter space with $n=p^d$,  the set of $p^{d(d-1)}$ ordered $d$-tuples
\begin{align*}
SB_{(p_1,\dots,p_d)}&=\{((p_1,x_1),(p_2,x_2),\dots,(p_d,x_d))\mid x_i\in [p^{d-1}]\}
\end{align*}
defines a  sub-block. These simulations were focused on testing the percentage coverage for $(i,j)$-edges from the set
\begin{align*}
E_{(p_i,p_j)}=\{((p_i,x_i),(p_j,x_j))\mid x_i,x_j\in [p^{d-1}]\},\end{align*}
where $i,j\in [d]$ and $p_i,p_j\in [p]$ are fixed. Note $| E_{(p_i,p_j)}|=(p^{d-1})^2$. Theorem \ref{th:x2n} allows us to calculate the expected coverage of this set of $(i,j)$-edges.

\begin{corollary}
\label{thm:int-edges}
Let $n=p^d$. Further let $M$ be a multiset of $m$ LH $d$-trials on $[p^d]$; that is $M\in {\cal M}$. Fix $i,j\in [d]$ and $p_i,p_j\in [p]$. Then the expected number of $(i,j)$-edges in $E_{(p_i,p_j)}$ and  common to all   $m$ LH $d$-trials  in $M$ is
\begin{align}
x_m(n)=p^{2d-2}{(p^d-1)!^{\,d-1}p^{d^2-2d}+m-1\choose m}\left/{p^d!^{\,d-1}+m-1\choose m}\right..\label{x2pd}
\end{align}
\end{corollary}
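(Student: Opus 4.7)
The plan is to imitate the proof of Theorem \ref{th:x2n}, but to restrict attention to the single pair of coordinates $(i,j)$ and to the specific sub-block of edges $E_{(p_i,p_j)}$ rather than summing over all $\binom{d}{2}$ edge-types and all $n^2$ edges. The key observation that makes this work is that Lemma \ref{lemC} is symmetric in the edge: \emph{every} $(i,j)$-edge $(a_i,a_j) \in [n]^2$ belongs to exactly $(n-1)!\,n!^{\,d-2}$ LH $d$-trials, regardless of which particular values $a_i,a_j$ take. In particular, the count is unaffected by whether $(a_i,a_j)$ lies inside the fixed sub-block $E_{(p_i,p_j)}$.

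First I would fix $i, j, p_i, p_j$ and fix an edge $(a_i,a_j) \in E_{(p_i,p_j)}$. By Lemma \ref{lemC}, this edge is contained in $(n-1)!\,n!^{\,d-2}$ distinct LH $d$-trials, so the number of ways to choose (with repetition) an $m$-multiset of LH $d$-trials all of which contain this edge is
\begin{align*}
s_{(a_i,a_j)} = \binom{(n-1)!\,n!^{\,d-2}+m-1}{m}.
\end{align*}
Next, letting $c(M)$ denote the number of edges in $E_{(p_i,p_j)}$ common to all $m$ LH $d$-trials in $M$, I would double-count $\sum_{M \in \mathcal{M}} c(M)$ by swapping the order of summation:
\begin{align*}
\sum_{M \in \mathcal{M}} c(M) = \sum_{(a_i,a_j) \in E_{(p_i,p_j)}} s_{(a_i,a_j)} = |E_{(p_i,p_j)}| \cdot \binom{(n-1)!\,n!^{\,d-2}+m-1}{m}.
\end{align*}
Since $|E_{(p_i,p_j)}| = (p^{d-1})^2 = p^{2d-2}$, dividing by $|\mathcal{M}| = \binom{n!^{\,d-1}+m-1}{m}$ from \eref{trial-repallowed} yields the expected count.

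Finally I would rewrite the numerator in the form demanded by the statement: using $(n-1)!\,n!^{\,d-2} = (n-1)!^{\,d-1} n^{d-2}$ (exactly as in the concluding line of the proof of Theorem \ref{th:x2n}) and the substitution $n = p^d$, so that $n^{d-2} = p^{d(d-2)} = p^{d^2-2d}$ and $(n-1)!^{\,d-1} = (p^d - 1)!^{\,d-1}$, the numerator becomes $p^{2d-2}\binom{(p^d-1)!^{\,d-1} p^{d^2-2d} + m - 1}{m}$, matching \eref{x2pd}. There is no real obstacle here: the argument is a direct specialization of Theorem \ref{th:x2n} to a single sub-block, and the only thing to verify carefully is that the combinatorial count in Lemma \ref{lemC} truly does not depend on which edge is fixed, which is immediate from the symmetry of LH $d$-trials under permutation of the values in each coordinate.
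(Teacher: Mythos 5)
Your proof is correct and follows essentially the same route as the paper: the paper simply cites Theorem \ref{th:x2n} and divides its answer by $p^2\binom{d}{2}$ to pass from all $n^2\binom{d}{2}$ edges to the $p^{2d-2}$ edges of $E_{(p_i,p_j)}$ for a fixed pair $(i,j)$, a step justified by exactly the edge-uniformity of Lemma \ref{lemC} that you invoke. Your version just carries out the double count directly on the restricted edge set, making that uniformity explicit rather than implicit; the algebraic rewriting $(n-1)!\,n!^{\,d-2}=(n-1)!^{\,d-1}n^{d-2}$ and the substitution $n=p^d$ are both handled correctly.
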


\begin{proof}
Theorem \ref{th:x2n} gives the expected number of edges common to all $m$ LH $d$-trials in $M$ (that is, $i$ and $j$ are not fixed). However we are interested in the expected number of edges in $E_{(p_i,p_j)}$ (with $i$ and $j$ fixed).
There are $\binom{d}{2}$ choices for the pair $(i,j)$, where $1 \leq i < j \leq d$. Also rather than summing over all $n^2=p^{2d}$ $(i,j)$-edges we sum over the $p^{2d-2}$ edges in $E_{(p_i,p_j)}$.
Therefore to evaluate $x_m(n)$ we  divide the result from Theorem \ref{th:x2n} by $p^2\binom{d}{2}$.
\end{proof}





\noindent {\bf Remark:}
There is a natural extension of this result to projection onto a subspace of arbitrary dimension $t > 2.$

\subsection{Bounds on percentage coverage of $d$-tuples}\label{subsec:theory-bds}

To estimate the number, $U(k,n)$, of cells in the parameter space covered by the union of $k$ $d$-trials, with $n$ partitions for each of the $d$ parameters, we count via the Principle of Inclusion/Exclusion obtaining
\begin{equation}
U(k,n) = \sum_{m=1}^k (-1)^{m+1} {k\choose m} x_m(n),
\label{eq:eq1}
\end{equation}
where $x_m(n)$ denotes the expected intersection size of $m$ arbitrary  trials depending on the sampling strategy. We define the expected coverage fraction to be
$$P(k,n) = \frac{U(k,n)}{n^d}.$$

 We have from Theorems \ref{int-LHT}, \ref{thm:int-OT} and \ref{th:x2n} three different expressions for the $x_m(n)$.  So let the expected numbers of ordered $d$-tuples in the case of LHS, OS and sub-block coverage for $t=2$ be, respectively,
 $x_{mL}(n)$, $x_{mO}(n)$ and $x_{m2}(n)$ then from Theorems \ref{int-LHT}, \ref{thm:int-OT} and \ref{th:x2n} we have

 \begin{align*}
x_{mL}(n)&=n^{d}\prod_{i=0}^{m-1}\frac{a+i}{b+i}, \quad a=(n-1)!^{d-1}, \quad b=n!^{d-1}\\
x_{mO}(n)&=n^{d}\prod_{i=0}^{m-1}\frac{a+i}{b+i}, \quad a=p^{d(d-1)(p-1)}(p^{d-1}-1)!^{dp}, \quad b=(p^{d-1})!^{dp}; \mbox{ and }\\
x_{m2}(n)&=n^{2}\prod_{i=0}^{m-1}\frac{a+i}{b+i}, \quad a=(n-1)!^{d-1}n^{d-2}, \quad b=n!^{d-1}.
\end{align*}
Note that in the case that $d=2$ then $x_{mL}(n) = x_{m2}(n).$

Now the binomial expansion  gives
\begin{align}
\sum_{m=0}^k{k\choose m}u^m=(1+u)^k.\label{eq:binom}
\end{align}
Also it is easy to see that, for $x\geq 0$,
\begin{align}
1+x\leq e^x&\mbox{ and } 1-e^{-x}\leq x,\label{eq:exponential1}
\end{align}
while, for $0\leq t<1$,
\begin{align}
e^{\frac{t}{2}}-1 \leq t&\mbox{ and }-\frac{t^2}{2}\leq t+\ln (1-t)\leq \frac{-t^2}{4}.
\label{eq:exponential2}
\end{align}
Moreover, for $0<a\leq b$ and $i\geq 0$,
\begin{align}
\frac{a}{b}\leq\frac{a+i}{b+i}\leq\frac{a}{b}\left(1+\frac{i}{a}\right)\leq \frac{a}{b}\exp\left({\frac{i}{a}}\right).\label{ineq:1}
\end{align}
Thus for $0\leq i\leq m-1\leq k-1$
\begin{align}
\left(\frac{a}{b}\right)^m\leq\prod_{i=0}^{m-1}\left(\frac{a+i}{b+i}\right)\leq\left(\frac{a}{b}\right)^m
\prod_{i=0}^{m-1}\left(1+\frac{i}{a}\right)\leq \left(\frac{a}{b}\right)^m\exp\left({\frac{k(k-1)}{2a}}\right), \label{eq:prod}
\end{align}
and for $0\leq t=\frac{k(k-1)}{a}\leq 1,$ using \eqref{eq:exponential2}
\begin{align}
0\leq \prod_{i=0}^{m-1}\left(\frac{a+i}{b+i}\right)-\left(\frac{a}{b}\right)^{m}\leq \left(\frac{a}{b}\right)^{m}\left(
\exp\left(\frac{k(k-1)}{2a}\right)-1\right)\leq \left(\frac{a}{b}\right)^{m}
\frac{k(k-1)}{a}.\label{bound}
\end{align}
We relate this back to the expression for the $x_m(n)$ for a general $a$ and $b$ with $0<a\leq b$ and
\begin{align*}
x_m(n)= n^{d}\prod_{i=0}^{m-1}\frac{a+i}{b+i}.
\end{align*}

Recalling that $P(k,n)$ denotes the expected coverage fraction of  $d$-tuples in the parameter space by taking the union of $k$  trials with either LHS or OS, we have the following result.

\begin{theorem}
\label{thm:LHS-OS-form}
In the case of Latin Hypercube sampling and Orthogonal sampling (with $n=p^d$)
\begin{equation}
P(k,n)\sim \left(1-\exp(-k \lambda)\right)\mbox{ as }k \lambda^2\to 0, \quad \lambda=\frac {1}{n^{d-1}}.
\end{equation}
\end{theorem}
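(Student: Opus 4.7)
The plan is to evaluate the inclusion--exclusion formula \eqref{eq:eq1} directly, exploiting the fact that the LHS and OS formulas for $x_m(n)$ share a common structure. First I would observe that in both cases the leading ratio $a/b$ equals $\lambda = 1/n^{d-1}$. For LHS this is immediate: $(n-1)!^{d-1}/n!^{d-1}=1/n^{d-1}$. For OS with $n=p^d$, the factorial ratio $(p^{d-1}-1)!^{dp}/(p^{d-1})!^{dp} = p^{-dp(d-1)}$, which when multiplied by $p^{d(d-1)(p-1)}$ collapses to $p^{-d(d-1)}=1/n^{d-1}=\lambda$. Thus in both settings, $x_m(n)/n^d = \prod_{i=0}^{m-1}(a+i)/(b+i)$ has leading-order value $\lambda^m$.

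Next I would invoke the two-sided estimate \eqref{bound}, which gives
\begin{align*}
0 \le \prod_{i=0}^{m-1}\frac{a+i}{b+i} - \lambda^m \le \lambda^m\cdot\frac{k(k-1)}{a},
\end{align*}
valid under the mild condition $k(k-1)/a\le 1$ (entirely benign because $a$ contains factorials of $n-1$ or of $p^{d-1}-1$). Substituting into \eqref{eq:eq1} and dividing by $n^d$ yields
\begin{align*}
P(k,n) = \sum_{m=1}^{k}(-1)^{m+1}\binom{k}{m}\prod_{i=0}^{m-1}\frac{a+i}{b+i} = \sum_{m=1}^{k}(-1)^{m+1}\binom{k}{m}\lambda^m + R_k = 1-(1-\lambda)^k + R_k,
\end{align*}
where the remainder satisfies
\begin{align*}
|R_k| \le \frac{k(k-1)}{a}\sum_{m=1}^{k}\binom{k}{m}\lambda^m = \frac{k(k-1)}{a}\bigl((1+\lambda)^k-1\bigr),
\end{align*}
which is negligible because of the enormous size of $a$.

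Finally, to pass from $1-(1-\lambda)^k$ to $1-\exp(-k\lambda)$, I would invoke the Taylor expansion $\ln(1-\lambda) = -\lambda - \lambda^2/2 + O(\lambda^3)$, which gives
\begin{align*}
(1-\lambda)^k = \exp\!\bigl(-k\lambda - k\lambda^2/2 + O(k\lambda^3)\bigr) = e^{-k\lambda}\bigl(1+O(k\lambda^2)\bigr).
\end{align*}
Under the hypothesis $k\lambda^2 \to 0$, the multiplicative correction tends to $1$, yielding $P(k,n) \sim 1-e^{-k\lambda}$.

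The main obstacle is bookkeeping: the individual binomial terms $\binom{k}{m}\lambda^m$ can be moderately large and the alternating signs in inclusion--exclusion invite concern about error cancellation. The saving feature is that the discrepancy between the exact product and its leading term $\lambda^m$ factorises as $\lambda^m$ times a factor $k(k-1)/a$ that is independent of $m$; this lets the error be pulled outside the alternating sum and controlled by the tame quantity $(1+\lambda)^k-1$, avoiding any delicate sign analysis.
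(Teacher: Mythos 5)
Your proposal is correct and follows essentially the same route as the paper's own proof: inclusion--exclusion, the product bound \eqref{bound} to replace each $\prod_{i=0}^{m-1}(a+i)/(b+i)$ by $\lambda^m$ with a controlled remainder, the comparison of $(1-\lambda)^k$ with $e^{-k\lambda}$ via the logarithm, and the identical computation showing $\lambda=a/b=1/n^{d-1}$ for both LHS and OS. The only cosmetic difference is that the paper splits the error into two named pieces $E_1$ and $E_2$ and cites Stirling's formula to make the ``$a$ is enormous'' step explicit, whereas you bundle the first error into a single remainder $R_k$; the substance is the same.
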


\begin{proof}
We begin by using the Principle of Inclusion/Exclusion, using \eqref{eq:binom}  and evaluating $P(k,n)$ in terms of the general form of $x_m(n)$, as follows
\begin{align*}
P(k,n)&=\sum_{m=1}^k(-1)^{m+1}{k\choose m}x_m(n)/n^d=\sum_{m=1}^k(-1)^{m+1}{k\choose m}\prod_{i=0}^{m-1}\frac{a+i}{b+i}\\
&=1-\sum_{m=0}^k(-1)^{m}{k\choose m}\lambda^m+E_1 =1-\exp(-k \lambda)+E_2+E_1,
\end{align*}
where $\lambda=\frac{a}{b}$,
\begin{align*}
E_1&= \sum_{m=1}^{k}(-1)^m{k\choose m} \left[\lambda^m-\prod_{i=0}^{m-1}\left(\frac{a+i}{b+i}\right)\right]
\end{align*}
and
\begin{align*}
E_2&= \exp (-k\lambda)-(1-\lambda)^k.
\end{align*}
It follows from \eqref{bound} and then \eqref{eq:binom} and  \eqref{eq:exponential1} that
\begin{align*}
|E_1|&\leq \sum_{m=0}^{k}{k\choose m} \lambda^m\frac{k(k-1)}{a}\leq \exp(k \lambda)\frac{k(k-1)}{a}.
\end{align*}
Moreover, it follows from \eqref{eq:exponential1} and \eqref{eq:exponential2} that
\begin{align*}
|E_2|&= \exp (-k\lambda)\left|1-\exp(k\lambda+k\ln\left(1-\lambda \right))\right|\leq \exp (-k\lambda)k\lambda^2.
\end{align*}
As $n\rightarrow \infty$
\begin{align*}
E=E_1+E_2
&={\rm O}\left(1-\exp(-k\lambda)\right)\mbox{ and}\\
E=E_1+E_2&={\rm O}\left(\exp(-k\lambda)\right),
\end{align*}
provided $k\lambda\leq Cn$; note it follows from Stirling's formula  that $\frac{e^{2k\lambda}k^2}{a}\rightarrow 0$ in this case.
Thus
\begin{align*}
P(k,n)\sim \left(1-\exp(-k\lambda)\right),\mbox{ as }k\lambda^2\to 0.
\end{align*}

Finally, in the case of LHS with $a=(n-1)!^{d-1}$ and $b=n!^{d-1}$ then
\begin{align*}
\lambda=\frac{1}{n^{d-1}};
\end{align*}
while with OS $ a=(p^{d-1}-1)!^{dp}p^{d(d-1)(p-1)}$ and $b=(p^{d-1})!^{dp}$ and so

\begin{align*}
\lambda&=p^{d(d-1)(p-1)}\left(\frac{(p^{d-1}-1)!}{(p^{d-1})!}\right)^{dp}=p^{d(d-1)(p-1)-(d-1)dp}=\frac{1}{p^{d(d-1)}}=\frac{1}{n^{d-1}}.
\end{align*}

Thus $\lambda$ is the same in both cases and the percentage coverage is the same in both cases (assuming that $n=p^d$ for the OS case) and so the result is proved.
\end{proof}

We can extend this analysis to the case of the  2 dimensional sub-block projection ($t=2$) but now
$P(k,n) = \frac{U(k,n)}{n^2}.$

\begin{theorem}
\label{thm:sub-block-form}
For the 2 dimensional sub-block projection
\begin{equation}
P(k,n)\sim \left(1-\exp(-k \lambda)\right)\mbox{ as }k \lambda^2\to 0, \quad \lambda=\frac {1}{n}.
\end{equation}
\end{theorem}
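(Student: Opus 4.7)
The plan is to follow the template of the proof of Theorem \ref{thm:LHS-OS-form} almost verbatim, simply swapping the sub-block intersection count $x_{m2}(n)$ in for $x_m(n)$ and dividing by $n^2$ in place of $n^d$. Recall from the list preceding Theorem \ref{thm:LHS-OS-form} that
\begin{align*}
x_{m2}(n)=n^2\prod_{i=0}^{m-1}\frac{a+i}{b+i},\qquad a=(n-1)!^{\,d-1}n^{d-2},\qquad b=n!^{\,d-1},
\end{align*}
so the factor $n^2$ cancels against the normalisation and the Principle of Inclusion/Exclusion yields
\begin{align*}
P(k,n)=\sum_{m=1}^{k}(-1)^{m+1}\binom{k}{m}\prod_{i=0}^{m-1}\frac{a+i}{b+i},
\end{align*}
which is structurally identical to the expression that drove the proof of Theorem \ref{thm:LHS-OS-form}.

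The next step is to apply the decomposition $P(k,n)=1-\exp(-k\lambda)+E_1+E_2$ with $\lambda=a/b$ and the same error terms $E_1$, $E_2$ defined there. The bounds \eqref{bound}, \eqref{eq:exponential1} and \eqref{eq:exponential2} then yield $|E_1|\leq\exp(k\lambda)k(k-1)/a$ and $|E_2|\leq\exp(-k\lambda)k\lambda^2$, carried over without modification. The one computation specific to this theorem is
\begin{align*}
\lambda=\frac{a}{b}=\frac{(n-1)!^{\,d-1}n^{d-2}}{n!^{\,d-1}}=\frac{n^{d-2}}{n^{d-1}}=\frac{1}{n},
\end{align*}
delivering the advertised value of $\lambda$.

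Finally I would verify that $E_1+E_2\to 0$ in the regime $k\lambda^2\to 0$. The bound on $E_2$ is immediate from \eqref{eq:exponential1}--\eqref{eq:exponential2}. For $E_1$, observe that the sub-block value of $a$ differs from the LHS value only by the harmless multiplicative factor $n^{d-2}$, so $a$ still grows super-exponentially in $n$ for $d\geq 2$, and Stirling's formula gives $e^{2k\lambda}k^2/a\to 0$ under the mild side condition $k\lambda\leq Cn$, just as in the proof of Theorem \ref{thm:LHS-OS-form}. The ``main obstacle'' is purely bookkeeping: confirming that the ratio $a/b$ collapses cleanly to $1/n$ and that the error estimates transfer unchanged when $a$ is replaced by the slightly larger sub-block value; no new analytic input is required.
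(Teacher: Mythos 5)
Your proposal is correct and matches the paper's proof, which likewise treats Theorem \ref{thm:sub-block-form} as an instance of the general framework set up for Theorem \ref{thm:LHS-OS-form} and reduces to the single computation $\lambda=\dfrac{(n-1)!^{\,d-1}n^{d-2}}{n!^{\,d-1}}=\dfrac{1}{n}$. Your additional remarks verifying that the error bounds carry over are sound but are left implicit in the paper.
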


\begin{proof}
With $ a=(n-1)!^{d-1}n^{d-2} $ and $ b=n!^{d-1}$ then

\begin{align*}
\lambda&=\frac{(n-1)!^{d-1}n^{d-2}}{n!^{d-1}}=n^{d-2}\frac{(n-1)!^{d-1}}{n!^{d-1}}=\frac{1}{n}.
\end{align*}
\end{proof}

We now present some simulation results confirming our  theoretical results.  We focus on the case $d=5$ and give expected coverage at the 25, 50, 75 and 100 percent levels for $t=2, 3, 4$ by plotting the logarithm of the number of trials as a function of the logarithm of $n.$

\section{Discussion and  conclusions}

Theorem \ref{thm:LHS-OS-form} states that the expected coverage of both LHS and OS is of the form $1-exp(-k\lambda)$ where $\lambda =\frac{1}{n^{d-1}}$, while Theorem \ref{thm:sub-block-form} states that the expected coverage when projecting onto a $t$ dimensional subspace with $t=2$ has the same form but now $\lambda=\frac{1}{n}$ and this coverage is independent of $d$.  Although, we have not presented the analysis here we can extend the results of Theorem \ref{thm:sub-block-form} to arbitrary $t$ so that $\lambda =\frac{1}{n^{t-1}}$.  Figure \ref{d5fig} confirms these results.  In all but the full coverage case the gradient of the straight line is $t-1.$  In the case of the full coverage the gradient appears to behave as $t-1/2.$ We think this is partly due to the effect  that
as the percentage coverage increases then the higher is the rate of overlapping $d$-trials.  Nevertheless our numerical results are consistent with the theory.

In conclusion, we have obtained analytical  results for the expected coverage of parameter space when using both Latin Hypercube and Orthogonal sampling.  We have shown that there is no difference between the  two in terms of the expected coverage.  We have  obtained analytical results of the expected  coverage when projecting onto  small dimension subspaces.  In this case the expected coverage is independent of the dimension of  the parameter space and depends only on the dimension of the projected subspace.  These result have relevance in the construction of Experimental Designs.  The analytical results are supported by simulations.

\section {Acknowledgements}
Authors 1 and 5 wish to thank The University of Queensland's Centre for Coal Seam Gas (CCSG) for their support. 
Author 2 wishes to thank Blanca Rodriguez, Alfonso Bueno-Orovio and Ollie Britton and the Computational Cardiovascular group in the Department of Computer Science at the University of Oxford for on-going and deep discussions about Populations of models.
 Author 6 wishes to acknowledge the support from TUBITAK 2219.

\begin{figure}[htb]
\caption{Coverage of a $d=5$-dimensional parameter space when projected onto $t=2, 3, 4$-dimensional subspaces} \label{d5fig}
\centering
$$\begin{array}{cc}
\includegraphics[width=0.45\textwidth]{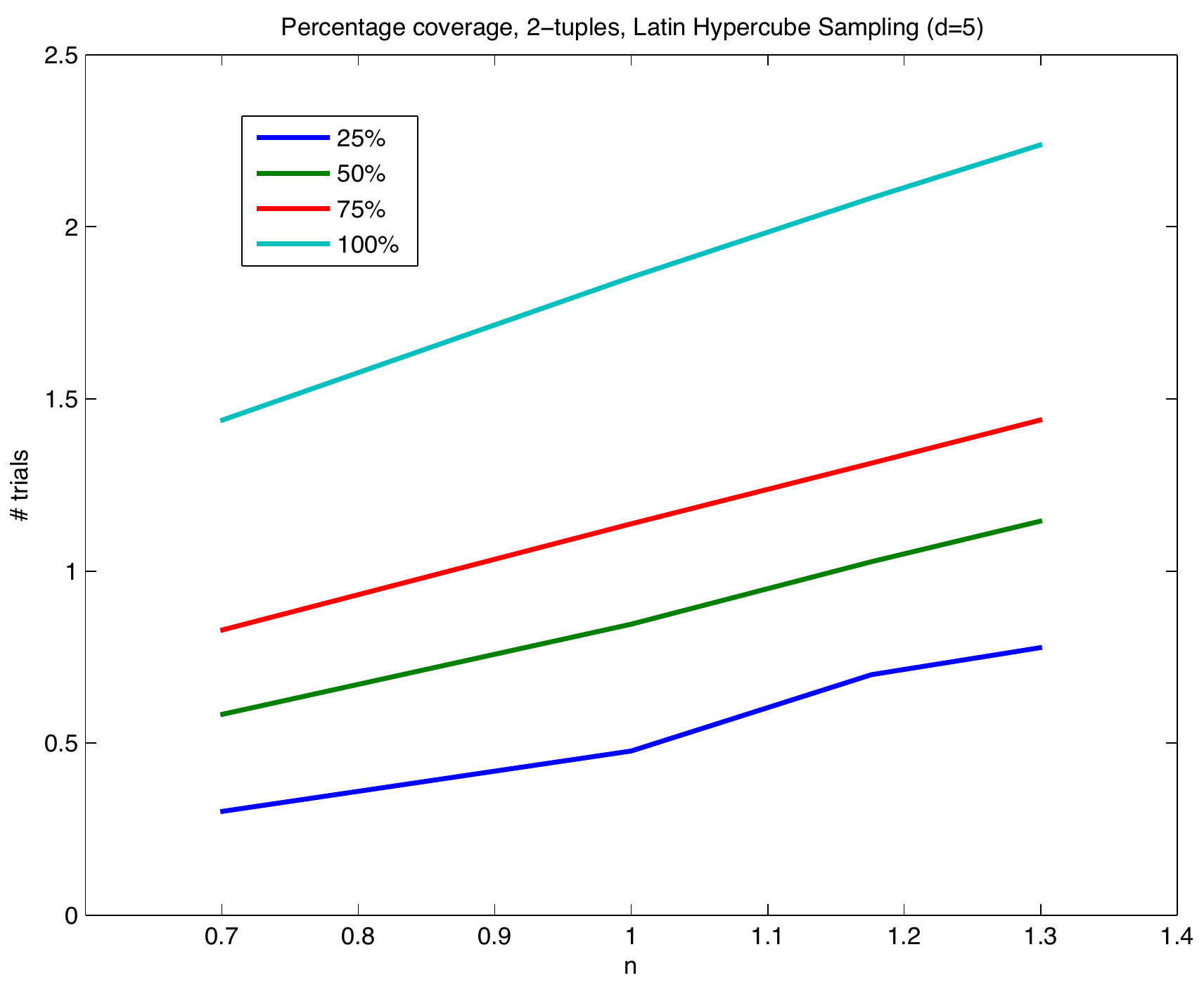} & \includegraphics[width=0.45\textwidth]{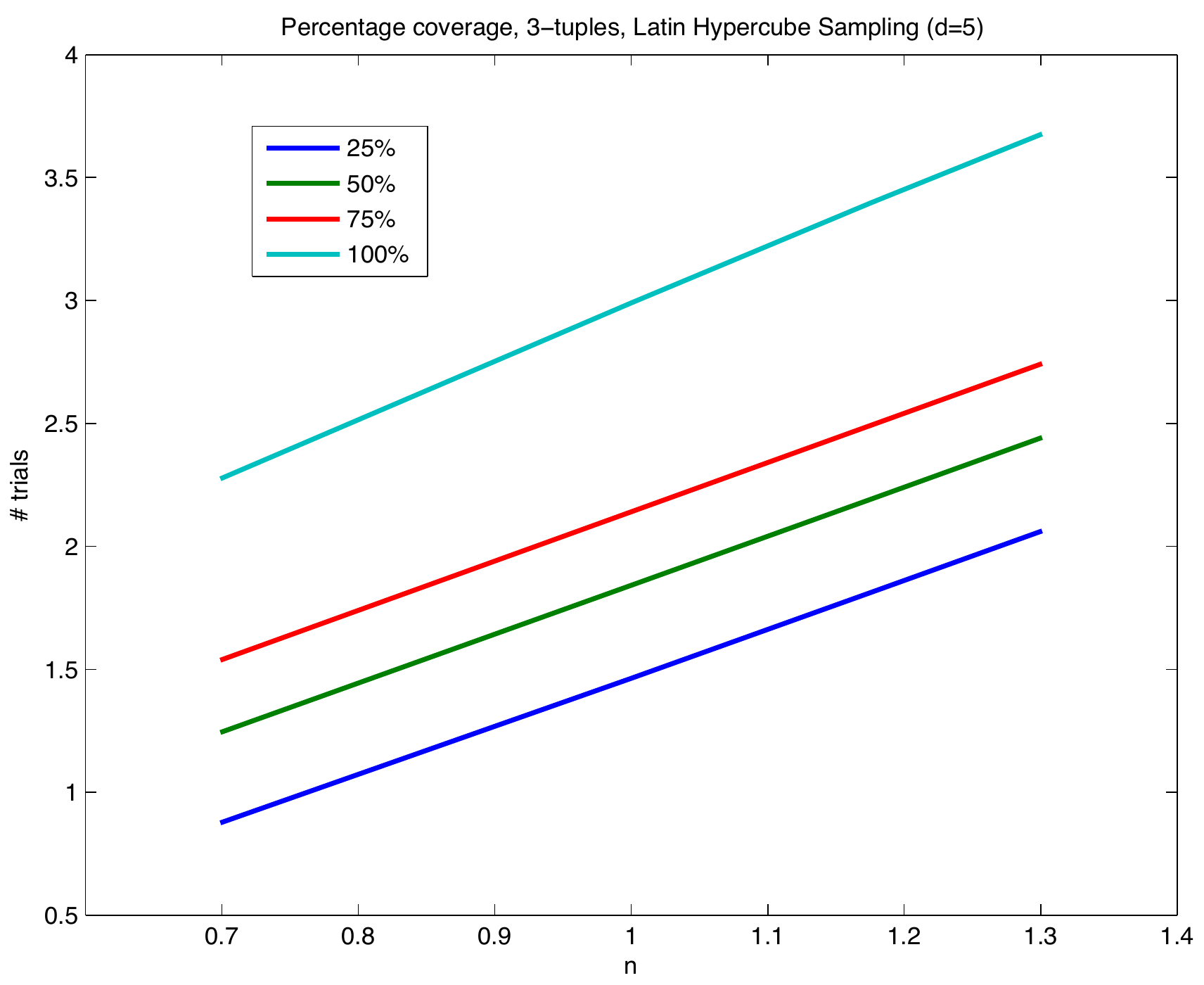}
\end{array}$$
$$\begin{array}{c}
\includegraphics[width=0.45\textwidth]{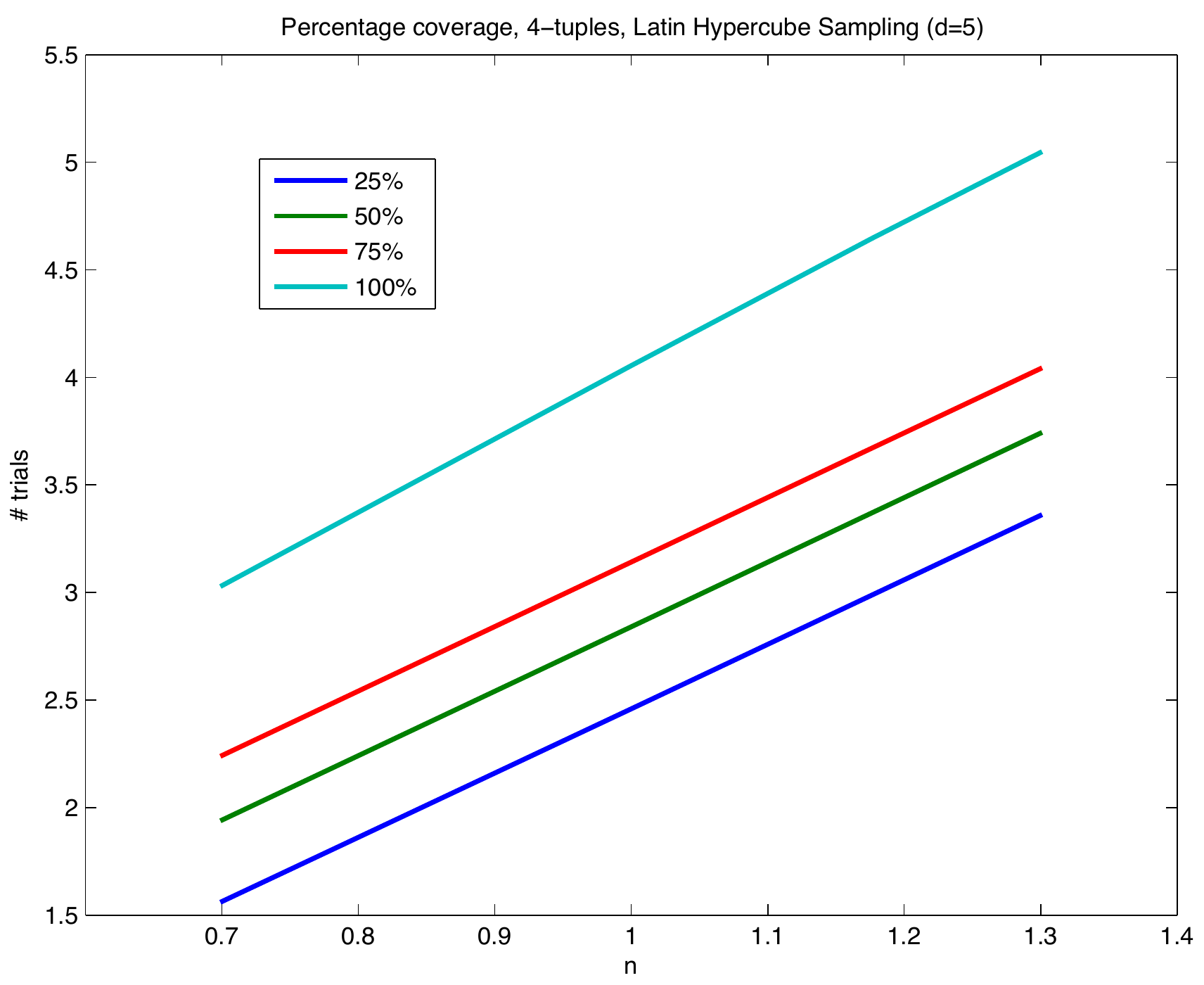}
\end{array}$$
\end{figure}

 \bibliographystyle{plain}
    \bibliography{lhs-ref}

\begin{thebibliography}{10}

\bibitem{A1}
O.J. Britton, A.~Bueno-Orovio, K.~Van Ammel, H.R. Lu, R.~Towart, D.J.
  Gallacher, and B.~Rodriguez.
\newblock Experimentally calibrated population of models predicts and explains
  inter subject variability in cardiac cellular electrophysiology.
\newblock {\em PNAS}, 110(23), Jun 2013.

\bibitem{BBDT}
K.~Burrage, P.~Burrage, D.~Donovan, and H.B. Thompson.
\newblock Populations of models, experimental designs and coverage of parameter
  space by {L}atin hypercube and orthogonal sampling.
\newblock {\em Procedia Computer Science}, 51:1762--1771, 2015.

\bibitem{A4}
K.~Burrage, P.M. Burrage, D.~Donovan, T.A. McCourt, and H.B. Thompson.
\newblock Estimates on the coverage of parameter space using populations of
  models.
\newblock {\em Modelling and Simulation, IASTED, ACTA Press}, pages DOI:
  10.2316/P.2014.813--013, 2014.

\bibitem{A3}
A.~Carusi, K.~Burrage, and B.~Rodriguez.
\newblock Bridging experiments, models and simulations: {A}n integrative
  approach to validation in computational {C}ardiac {E}lectrophysiology.
\newblock {\em Am. J. Physiology}, 303(2):H144--55, 2012.

\bibitem{B2}
E.T.Y. Chang, M.~Strong, and R.H. Clayton.
\newblock Bayesian sensitivity analysis of a cardiac cell model using a
  gaussian process emulator.
\newblock {\em PLoS ONE}, 10(6), June 2015.

\bibitem{CG1}
S-K. Choi, R.V. Gandhi, R.A. Canfield, and C.L. Pettit.
\newblock Polynomial chaos expansion with {L}atin {H}ypercube sampling for
  estimating response variability.
\newblock {\em AIAA Journal}, 42(6):1191--1198, 2004.

\bibitem{CD}
C.J. Colbourn and (Eds) J.H.~Dinitz.
\newblock {\em Handbook of {C}ombinatorial {D}esigns, {S}econd {E}dition}.
\newblock Chapman \& Hall/CRC, Boca Raton, FL, 2006.

\bibitem{A5}
C.C. Drovandi, A.N. Pettitt, and M.J. Faddy.
\newblock Approximate {B}ayesian computation using indirect inference.
\newblock {\em Journal of the Royal Statistical Society: Series C (Applied
  Statistics)}, 60(3):317--337, 2011.

\bibitem{B3}
A.M.S. Dutta, J.~Walmsley, and B.~Rodriguez.
\newblock Ionic mechanisms of variability in electrophysiological properties.
\newblock {\em Ischemia}, 40:691--694, 2013.

\bibitem{B4}
P.~Gemmell, K.~Burrage, B.~Rodriguez, and T.A. Quinn.
\newblock Population of computational rabbit-specific ventricular action
  potential models for investigating sources of variability in cellular
  repolarisation.
\newblock {\em PLoS ONE}, 9(2), 2014.

\bibitem{LSZ}
H.~Li, P.~Sarma, and D.~Zhang.
\newblock A comparative study of the probabilistic-collocation and
  experimental-design methods for petroleum-reservoir uncertainty
  quantification.
\newblock {\em SPE Journal}, 16(2):429--439, 2011.

\bibitem{MT}
E.~Marder and A.L. Taylor.
\newblock Multiple models to capture the variability of biological neurons and
  networks.
\newblock {\em Computation and Systems, Nature Neuroscience}, 14(2):133--138,
  2011.

\bibitem{MBC}
M.D. McKay, R.J. Beckman, and W.J. Conover.
\newblock A comparison of three methods for selecting values of input variables
  in the analysis of output from a computer code.
\newblock {\em Technometrics}, 21(2):239--245, 1979.

\bibitem{B1}
C.~Sanchez, A.~Bueno-Orovio, E.~Wettwer, S.~Loose, J.~Simon, U.~Ravens,
  E.~Pueyo, and B~Rodriguez.
\newblock Inter-subject variability in human atrial action potential in sinus
  rhythm versus chronic atrial fibrillation.
\newblock {\em PLoS ONE}, 9(8), August 2014.

\bibitem{stein}
M.~Stein.
\newblock Large sample properties of simulations using {L}atin {H}ypercube
  sampling.
\newblock {\em Technometrics}, 29(2):143--151, 1987.

\bibitem{A7}
B.~Tang.
\newblock Orthogonal array-based {L}atin hypercubes.
\newblock {\em Journal of the American Statistical Association},
  88(424):1392--1397, 1993.

\bibitem{A2}
J.~Walmsley, J.F. Rodriguez, G.R. Mirams, K.~Burrage, I.R. Efimov, and
  B.~Rodriguez.
\newblock {MRNA} expression levels in failing human hearts predict cellular
  electrophysiological remodelling: {A} population based simulation study.
\newblock {\em PLoS ONE}, 8(2), 2013.

\bibitem{A8}
W.J. Welch, R.J. Buck, J.~Sacks, H.P. Wynn, T.J. Mitchell, and M.D. Morris.
\newblock Screening, predicting, and computer experiments.
\newblock {\em Technometrics}, 34:15--25, 1992.

\end{thebibliography}

\end{document}